\newtheorem{lemma}{Lemma}
\newtheorem{conjecture}{Conjecture}[section]
\newtheorem{thm}{Theorem}
\newtheorem{cor}{Corollary}[section]
\newtheorem{prop}{Proposition}[section]
\newcounter{def}
\newcommand{\dist}{\mathop{\rm dist}\nolimits}
\newcommand{\cg}{\mathop{\rm CG}\nolimits}
\newcommand{\eps}{\varepsilon}
\newcommand{\dd}{\mathrm{d}}
\begin{document}

\title{Optimal packings of congruent circles on a square flat torus }
\author{Oleg~R.~Musin\thanks{The author is supported by the Delone Laboratory of Discrete and Computational Geometry (P.~G.~Demidov Yaroslavl State University) under RF Government grant 11.G34.31.0053, RFBR  grant 11-01-00735 and NSF grant DMS-1101688.} and Anton~V.~Nikitenko\thanks{The author is supported by the Chebyshev Laboratory  (Department of Mathematics and Mechanics, St. Petersburg State University) under RF Government grant 11.G34.31.0026 and by the Delone Laboratory of Discrete and Computational Geometry (P.~G.~Demidov Yaroslavl State University) under RF Government grant 11.G34.31.0053}.}

\maketitle
\begin{abstract}
We consider packings of congruent circles on a square flat torus, i.e., periodic (w.r.t. a square lattice) planar circle packings, with the maximal circle radius. This problem is interesting due to a practical reason --- the problem of ``super resolution of images.'' We have found optimal arrangements for $N=6$, $7$ and $8$ circles. Surprisingly, for the case $N=7$ there are three different optimal arrangements. Our proof is based on a computer enumeration of toroidal irreducible contact graphs. 
\end{abstract}

\section{Preface}

\subsection{History}

A class of problems of an optimal (in different senses) placing of equal balls in different containers is a huge class of interesting research problems in discrete geometry. The best known variations are the following: to find the densest packing of equal balls in $\mathbb R^n$, to find the minimal covering of $\mathbb R^n$ by equal balls, to find the maximal number of equal balls that can touch another one with the same radius, and to find the maximal number of equal balls that can fit into a prescribed container. One can also allow balls of different radii or even replace the balls with something like simplices or other objects, or study the universal properties of packings of convex bodies, giving rise to many other similar problems that are also of interest. These problems have been widely studied and appeared to be very complex despite having very simple formulations. Another point is that many of these problems have practical value (even for the ``strange'' containers and higher dimensions). Some of the results are collected in the following books: \cite{BMP}, \cite{Conw}, \cite{FeT}.

In this paper we are talking about one of the aforementioned problems, where the container is the flat torus $\mathbb R^2/\mathbb Z^2$ with the induced metric and the goal is to find the maximal radius of the disk, whose $N$ congruent copies can be fit in the torus without overlapping. The problem is equivalent to finding the densest packing, as the area of the torus is constant and equals one, and so the best density is $$\rho = N\pi R_{max}^2.$$

We found out about this problem from Daniel Usikov (\cite{Us}), who described the practical reason for studying optimal disk packings on the torus. According to him, this case is especially interesting for the problem of ``super resolution of images.'' Here are several extracts from his letter that provide a more detailed description:

\begin{quotation}{
\it
Let's assume that the image obtained by an observation has an infinite accuracy. Now assume that we have just two  observations. If both of them are conducted from the same point, obviously, these two observations are not better than just one.

Now let's shift the position of the second with respect to the first one.  Then any shift would be equivalent from the point of view of super resolution restoration. But in practical applications we need to take into account two sources of observational errors. First, the exact amount of the shift can be found with some accuracy only, and second, the brightness in each pixel of the image has an intrinsic error -- so called ``photon noise''.  So if the shift is within its statistical error, any attempt to calculate the super-resolution image will fail. Furthermore, it is visible from the formulas that they operate with the differences in brightness between adjacent pixels. So, by separating the observation points as far as possible, one can reduce the error in the estimation of the brightness differences.

It definitely looks like the best result of super resolution observation would be to maximize the distance between two points of observations. For the case of the square pixel, it is the half of the pixel's diagonal. When a third point is added, we expect that the best allocation would be to maximize the minimal distance between three point, etc.

I think that a rigorous prove of the max-min requirement is achievable. As a simplification on the road to the prove, one can assume that the brightness in each pixel takes a random value within an interval (say 0:1). Then averaging over all possible random brightness samples, the brightness should gone from the optimization formulas, and the task will be reduced to the search of the optimal location of observation points on the torus.}

\medskip 
 
{\it Sent by email, from Daniel Usikov (dusikov@yahoo.com)

November 22, 2010}
\end{quotation}

Obviously, the optimal packing in the torus could not be worse than the optimal packing in the square $[0,1]^2$. Here are some results for the small number of disks in the square, borrowed from \cite{BMP} (See \cite{Mel}, \cite{Sch}, \cite{SchM}). Here $d$ denotes the distance between the centers. Corresponding configurations are shown in Figure \ref{fig:square}.
\begin{table}[h]
	\centering
		\begin{tabular}{c|c|c|c|c|c|c|c|c|c}
			N & 2 & 3 & 4 & 5 & 6 & 7 & 8 & 9\\
			\hline
			$d \approx$ & 0.5858 & 0.5087 & 0.5000 & 0.4142 & 0.3754 & 0.3489 & 0.3411 & 0.3333
		\end{tabular}
\end{table}
\begin{figure}[h!]
    \centering
    \includegraphics[scale=0.5, clip, trim = 0 0 0 80]{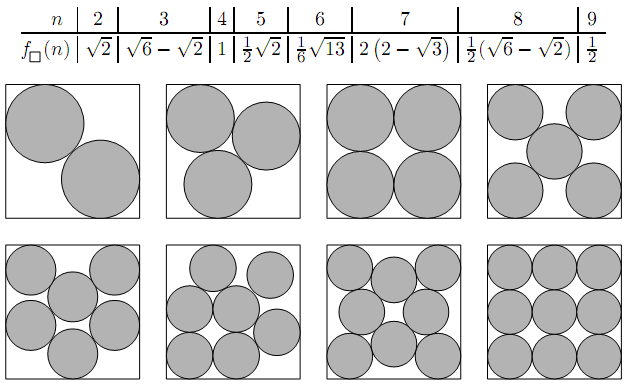}
    \caption{The optimal configurations for the square}
    \label{fig:square}
\end{figure}%

\subsection{The formal statement}

The problem is: for a given number $N\geq 1$ of points, find the maximal $r\in\mathbb{R^+}$ such that $N$ circles of radius $r$ could be put on the square flat torus $\mathbb{T} = \mathbb{R}^2/\mathbb{Z}^2$ without overlapping, or, equivalently, to find the maximal $d\in\mathbb{R^+}$ such that there are $N$ points on the torus with pairwise distances not less than $d$ (where $d=2r$).

We will work with the second statement.

\subsection{Known results for the torus}

Optimal configurations of up to 4 circles are relatively easy to find, and for $N\leq 5$ they were found in \cite{Dick}. These configurations and the corresponding $d$-s are presented in Figure~\ref{fig:optsmall}. D.~Usikov (\cite{Us}) has conjectured some optimal configurations using numerical simulations for the small numbers of disks.
\setlength\fboxsep{0pt}
\setlength\fboxrule{0.5pt}
\begin{figure}[tb]%
	 \centering
        \begin{subfigure}[b]{0.25\textwidth}
                \centering
                \fbox{\includegraphics[clip, trim = 1 1 3 3,scale=0.15]{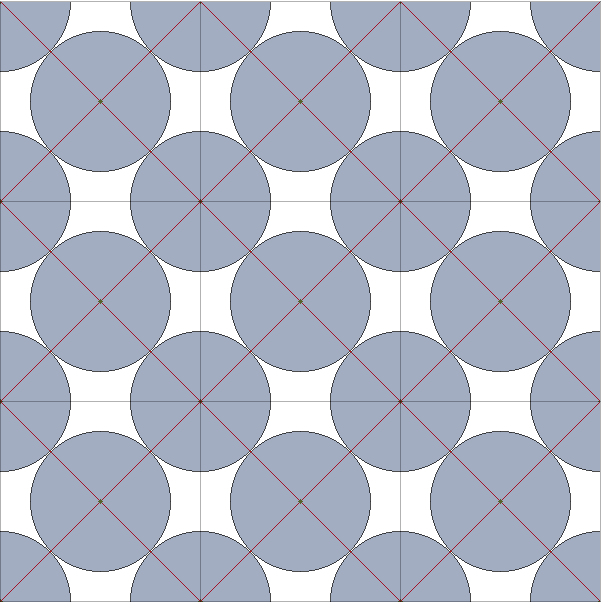}}
                \caption[]{$\begin{array}{c}N=2\\ d=\frac{\sqrt{2}}{2} \end{array}$}
                \label{fig:opt2}
        \end{subfigure}%
        \begin{subfigure}[b]{0.25\textwidth}
                \centering
                \fbox{\includegraphics[clip, trim = 1 1 3 3,scale=0.15]{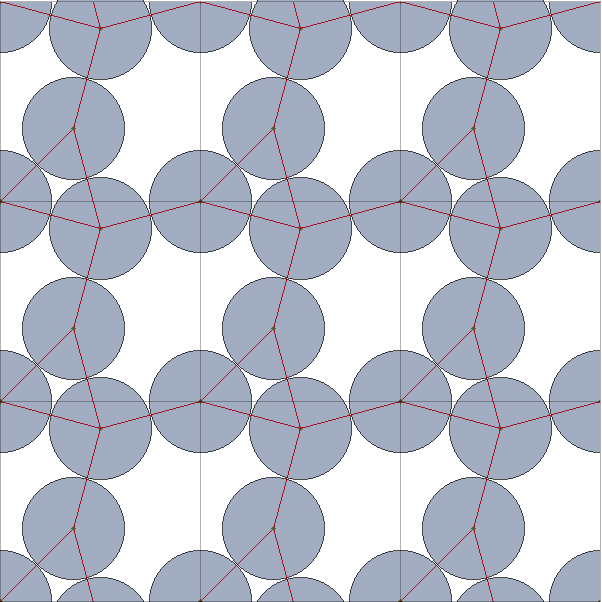}}
                \caption[]{$\begin{array}{c}N=3\\ d=\frac{\sqrt{6}-\sqrt{2}}{2} \end{array}$}
                \label{fig:opt3}
        \end{subfigure}%
        \begin{subfigure}[b]{0.25\textwidth}
                \centering
                \fbox{\includegraphics[clip, trim = 1 1 3 3,scale=0.15]{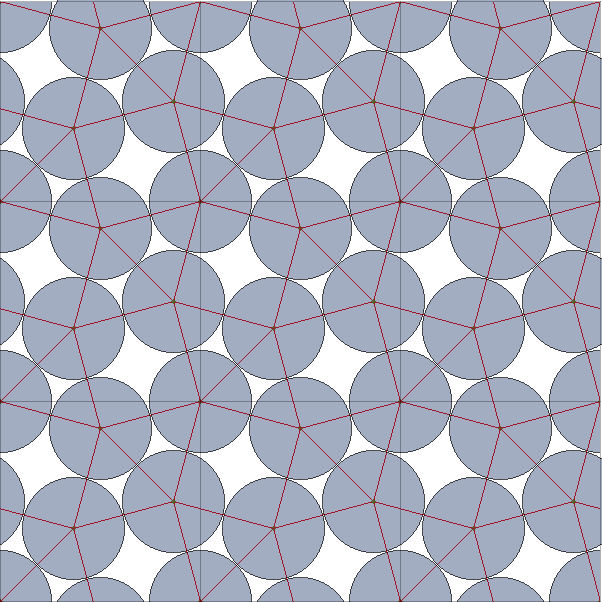}}
                \caption[]{$\begin{array}{c}N=4\\ d=\frac{\sqrt{6}-\sqrt{2}}{2} \end{array}$}
                \label{fig:opt4}
        \end{subfigure}%
        \begin{subfigure}[b]{0.25\textwidth}
                \centering
                \fbox{\includegraphics[clip, trim = 1 1 3 3,scale=0.15]{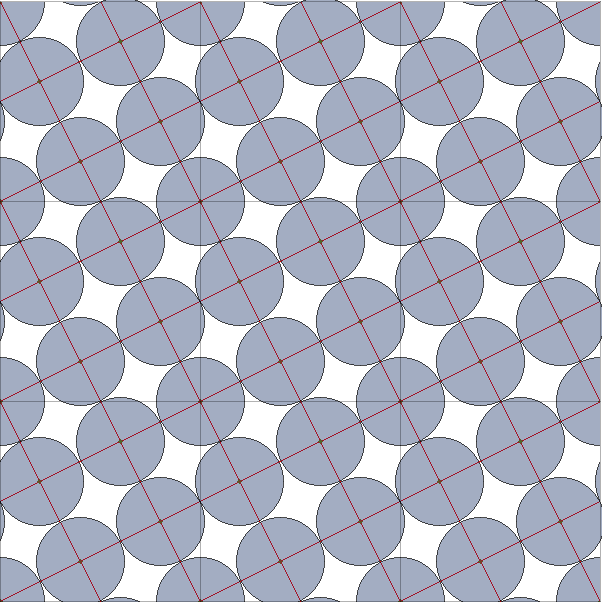}}
                \caption[]{$\begin{array}{c}N=5\\ d=\frac{\sqrt{5}}{5} \end{array}$}
                \label{fig:opt5}
        \end{subfigure}%
    \caption{Optimal configurations of up to five points in the torus}
    \label{fig:optsmall}
\end{figure}

\section{Main results}

In this paper, we present solutions of the problem for $N=6, 7$ and $8$ and a conjecture for the case $N=9$.

\begin{thm} 
\label{thm6}
The arrangement of 6 points in ${\mathbb T}^2$ which is shown in Figure \ref{fig:opt6}
is the best possible, the maximal arrangement is unique up to isometry, and $d \approx 0.40040554$. The precise formula is
$$d=\frac{\sqrt{3}}{2} - \frac{\sqrt{2}\sqrt{3\sqrt{3} + 2}}{6} + \frac 16.$$
\end{thm}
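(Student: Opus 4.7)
The approach is standard for circle-packing problems: reduce to the enumeration of contact graphs of locally maximal configurations. Given any packing $P$ of $N$ disks of radius $d/2$ on $\mathbb{T}^2$, form its contact graph $G(P)$ whose vertices are the disk centers and whose edges join pairs of centers at distance exactly $d$. If $P$ is a (global) maximizer, then no small continuous motion of any subset of centers can weakly increase the minimum pairwise distance, so $G(P)$ must be \emph{irreducible} in this sense. Since the space of $N$ labeled points on $\mathbb{T}^2$ modulo translations has dimension $2N-2$, an irreducible graph must carry enough edges to block all infinitesimal motions, which gives a concrete lower bound on $|E(G(P))|$ and bounds the enumeration.

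First I would classify, by computer, all combinatorial types of irreducible toroidal contact graphs on $N=6$ vertices. Because each edge on the torus carries a homology class in $\mathbb{Z}^2$ (the short lift used to compute its length), a candidate is a multigraph together with a $\mathbb{Z}^2$-label on each edge. A practical enumeration uses the known lower bound $d_6 \approx 0.4004$ coming from the configuration in Figure~\ref{fig:opt6}: partial graphs that already force some pairwise distance below $d_6$, or that cannot be extended to an irreducible graph, are discarded immediately.

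Next, for every surviving candidate graph $G$, I would write the algebraic system stating that each edge has length exactly $d$ in its prescribed homology class, place one vertex at the origin to kill translations, and solve for the maximum admissible $d(G)$ together with the coordinates of the remaining five vertices. Wherever the edge-length equations admit a solution, the non-edges must also be verified to satisfy the inequality $\geq d$. The graph attaining the global maximum should be exactly the one realized by the arrangement of Figure~\ref{fig:opt6}, and the closed-form expression
$$d=\frac{\sqrt{3}}{2} - \frac{\sqrt{2}\sqrt{3\sqrt{3}+2}}{6} + \frac{1}{6}$$
should appear as a root of a low-degree polynomial obtained by eliminating the coordinate unknowns from the edge-length system for that particular graph.

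The main obstacle is the combinatorial step: once the $\mathbb{Z}^2$-homology labels are included, the number of candidate graphs is large, and one must reliably test irreducibility (an infinitesimal-rigidity condition on the edge framework, adapted to the torus) and realizability (no pair of non-adjacent vertices ends up closer than the edge length). Correctness of the computer search is therefore the delicate point; the subsequent symbolic solving of each surviving system is algebraically intricate but routine in principle, and comparing the finitely many resulting values of $d(G)$ is straightforward. Uniqueness up to isometry then follows because only one combinatorial type attains the maximum and its realization is rigid modulo the isometry group of $\mathbb{T}^2$.
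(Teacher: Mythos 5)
Your high-level strategy (irreducible contact graphs, enumeration, solve the edge-length systems, compare) is the same as the paper's, but two of the steps you describe as routine are precisely where the paper has to do something different because the naive version fails. First, the symbolic elimination you propose for each surviving graph does not work: the paper explicitly observes that these edge-length systems are not in general position and typically have infinitely many complex solutions, so standard polynomial root-isolation and elimination fail when applied directly (they only salvaged this for one exceptional graph in the $N=7$ case, by adjoining $d\cdot s=1$). Instead the paper isolates real solutions by interval branch-and-bound, which yields only numerical enclosures; it then needs a separate quantitative local-uniqueness argument (a perturbation bound on $\|\dd F^{-1}\|_\infty$ combined with the bound $\|\tfrac12\dd^2F(v,v)\|_\infty\le 4$ for the quadratic map $F$) to certify that the enclosure contains exactly one solution, namely the known exact configuration. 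Without that certification step, ``comparing the finitely many resulting values of $d(G)$'' is not straightforward, and your closed form for $d$ never gets rigorously attached to the numerical output.

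Second, your enumeration is not well posed as stated. Labelling every edge by a class in $\mathbb{Z}^2$ gives an a priori infinite search space, and ``irreducibility'' cannot be tested on a combinatorial multigraph before an embedding is found. The paper's workaround is Lemma~\ref{lm}: every irreducible optimal graph without isolated vertices contains a \emph{cellular} subgraph with exactly $2N-1$ edges and minimum degree $3$, a finite class that \texttt{surftri} enumerates ($252$ graphs for $N=6$); homology is then fixed not per edge but by choosing two short homologically independent cycles, whose possible classes are pinned down by Lemmas~\ref{lc1}--\ref{lc4} using $d\le d(5)$. This deliberately produces \emph{subgraphs} of contact graphs, which is why the paper needs the further step (obtuse-angle arguments for $G_3$, $G_4$) to reconstruct the missing edges. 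Finally, you do not treat configurations with a free disk (isolated vertex), which the paper's definition of irreducibility allows and which it handles by rerunning the search on fewer vertices and testing whether an extra point fits in a large face.
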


\begin{thm} 
\label{thm7}
There are three, up to isometry or up to a move of a free disk, optimal arrangements of 7 points in ${\mathbb T}^2$ which are shown in Figures \ref{fig:opt7_1} -- \ref{fig:opt7_3} where $d = \frac{1}{1+\sqrt{3}}\approx0.3660$.
\end{thm}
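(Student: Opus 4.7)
The plan is to follow the strategy announced in the abstract: enumerate all \emph{irreducible} contact graphs of 7-point arrangements on $\mathbb{T}^2$, solve the realization problem for each, and take the maximum. The contact graph of an arrangement has the seven points as vertices and an edge for every pair at minimum distance $d$; the arrangement is irreducible if no subset of points can be moved to strictly increase $d$. Any globally optimal arrangement is irreducible, so this classification is sufficient.

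My first step would be to derive combinatorial restrictions on an irreducible contact graph drawn by geodesic segments of length $d$ on the square torus. Euler's relation $V - E + F = 0$, combined with the fact that each face is an equilateral polygon with side $d$ and that the total face area equals $1$, confines the edge count $E$ to a narrow range and restricts the possible face profiles. A standard rigidity argument then shows that every vertex of the rigid part must carry at least three edges whose unit directions positively span $\mathbb{R}^2$; a vertex that violates this criterion is exactly a \emph{free disk} in the sense of the theorem statement. I would enumerate combinatorial torus maps with seven vertices satisfying these constraints, working modulo the isometry group of the square torus (the dihedral group $D_4$ composed with translations), so as to obtain a finite catalogue of candidate contact graphs.

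For each candidate I would write the realization problem as a small semi-algebraic system: each edge imposes $\|p_i - p_j + \ell_{ij}\|^2 = d^2$ with $\ell_{ij}\in\mathbb{Z}^2$ encoding how it wraps on the torus, and each non-edge imposes $\ge d^2$. Maximizing $d$ over this set yields an algebraic optimum that can be obtained in closed form when the systems are small enough. The global maximum $d = 1/(1+\sqrt{3})$ should be attained by exactly three combinatorial types, which I would identify with the configurations in Figures~\ref{fig:opt7_1}--\ref{fig:opt7_3}; one of them is expected to have a vertex that is rigidity-free in the sense above, accounting for the free disk.

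The main obstacle is completeness and rigour of the enumeration: several combinatorial types survive the Euler and rigidity filters, and one must verify by explicit symbolic solution that none of them beats $1/(1+\sqrt{3})$. The free-disk case is subtler because the optimal arrangements there form a one-parameter family; irreducibility must be reinterpreted as local optimality of the family, and one must then check that perturbations transverse to the family strictly decrease the minimum distance, so that this third type really contributes a single orbit of optima rather than a larger moduli space. This verification, together with the exhaustive elimination of the non-optimal candidates, is the step for which computer enumeration (and exact symbolic algebra rather than floating-point estimates) is genuinely indispensable.
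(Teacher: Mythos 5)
Your overall skeleton --- computer enumeration of toroidal contact graphs, realization of each as a polynomial system with integer wrapping vectors $\ell_{ij}\in\mathbb{Z}^2$, elimination of all candidates except the optima --- is indeed the paper's strategy, but two of your concrete steps would fail as written. The paper does \emph{not} enumerate full irreducible contact graphs and solve each realization: it enumerates cellular toroidal graphs with exactly $2N-1$ edges and all degrees at least $3$, which are guaranteed (Lemma 6) to occur as subgraphs of any irreducible optimal graph. The reason is that the realization system has $3E-2N+2$ equations in $2E+1$ unknowns, so it is square only when $E=2N-1$, and a numerical solver cannot distinguish a graph from its subgraphs in any case. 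This forces a whole second stage absent from your plan: after isolating the embeddings of the $13$-edge subgraphs one must \emph{prove} that the extra edges of the $14$- and $15$-edge optimal graphs are actually present. The paper does this with obtuse-angle arguments, a quantitative local-uniqueness estimate of the form $\|F(X_0+\eps v)\|_\infty \ge \eps h - 4\eps^2$ (with $h$ bounded via the norm of the inverse differential), and a Karush--Kuhn--Tucker check. Your proposal has no mechanism for ruling out that a found embedding extends to a different, non-optimal irreducible graph.

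The second gap is your reliance on exact symbolic algebra as the ``genuinely indispensable'' tool. The paper reports the opposite: these polynomial systems are not in general position and typically have infinitely many complex solutions, so root-isolation packages fail when applied directly. Almost all of the elimination is done by interval branch-and-bound on the square system; symbolic isolation succeeds for exactly one recalcitrant graph (where the differential degenerates and the KKT conditions are satisfied), and only after adjoining the saturation condition $d\cdot s=1$ to cut away the infinite solution family. Two smaller points: your Euler-plus-area argument does not confine $E$, since equilateral faces of side $d$ have undetermined area --- the needed bound $E\ge 2N-1$ comes from Connelly's rigidity results; and the free-disk configuration is obtained by enumerating $6$-vertex graphs and verifying that an isolated vertex fits only inside a face whose universal cover has at least seven vertices, so the disk roams a two-dimensional region rather than a one-parameter family.
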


\begin{thm} 
\label{thm8}
There is one unique, up to isometry, optimal arrangement of 8 points in ${\mathbb T}^2$, which is shown in Figure \ref{fig:opt8} where $d = \frac{1}{1+\sqrt{3}}\approx0.3660$.
\end{thm}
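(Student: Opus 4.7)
The plan is to follow the same template used for Theorems \ref{thm6} and \ref{thm7}: an explicit lower-bound construction combined with a computer-assisted upper bound via enumeration of irreducible toroidal contact graphs. The lower bound is established by exhibiting the configuration of Figure \ref{fig:opt8} and checking directly, from its coordinates, that every pairwise toroidal distance is at least $d^{*} := 1/(1+\sqrt{3}) = (\sqrt{3}-1)/2$. Since $d^{*}$ already appears in Theorem~\ref{thm7}, a natural heuristic is that the $8$-point extremal arrangement is obtained by completing a $7$-point extremal arrangement with the ``free'' disk: the eighth disk fits exactly into the cell left empty by the first seven, with no slack, forcing both the value of $d$ and the uniqueness of the completion.

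For the upper bound, let $X\subset\mathbb{T}^{2}$ be an $8$-point set with $\min_{p\ne q}\dist(p,q)\ge d^{*}$. By a standard perturbation argument I would first reduce to the case where $X$ is \emph{irreducible} in the sense that no continuous deformation of its points strictly increases the minimum pairwise distance. Associate to $X$ its contact graph $G(X)$ drawn on $\mathbb{T}^{2}$, with an edge between each pair realising the minimum distance. Irreducibility constrains $G(X)$: the total edge count must be high enough to rigidly pin down the configuration modulo the $2$-parameter family of torus translations, and each low-degree vertex must correspond to a disk whose residual freedom of motion is geometrically obstructed by the complement of the others.

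The heart of the proof is a computer enumeration that (i)~lists all toroidal graphs on $8$ vertices, up to isometries of $\mathbb{T}^{2}$, which can occur as irreducible contact graphs of a packing; (ii)~for each such graph $G$ writes down the associated system of polynomial equalities (one per edge, of the form $\dist(p,q)^{2} = d^{2}$) and inequalities (one per non-edge, $\dist(p,q)^{2} \ge d^{2}$) in the point coordinates and the unknown $d$; and (iii)~computes the maximal $d$ compatible with this system. Taking the maximum over all admissible $G$ gives the global optimum, after which one verifies that this supremum equals $d^{*}$ and is attained only by the contact graph and realisation of Figure~\ref{fig:opt8}.

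The principal obstacle is step (i): one must show that the list of candidate irreducible contact graphs on $\mathbb{T}^{2}$ is finite, generate it exhaustively, and filter out those realisable only with $d<d^{*}$ or not realisable at all. The algebraic step (iii) is a secondary difficulty, since some of the polynomial systems may lack a closed-form solution and will require certified numerics (e.g.\ interval arithmetic) to rule them out rigorously. The relatively clean closed form $d^{*} = 1/(1+\sqrt{3})$ suggests that the extremal graph is highly symmetric and admits a purely algebraic verification; the delicate work is in guaranteeing that no other candidate yields a larger $d$.
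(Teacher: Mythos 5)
Your plan is workable in principle, but it is a genuinely different (and much heavier) route than the one the paper takes for $N=8$. The paper does not run the contact-graph enumeration on $8$ vertices at all: it observes that $d(N)$ is non-increasing (one may delete a point), so $d(8)\le d(7)=\frac{1}{1+\sqrt{3}}$, and since the explicit configuration of Figure \ref{fig:opt8} attains this value, optimality is immediate from Theorem \ref{thm7}. Uniqueness is likewise inherited: because $d(8)=d(7)$, deleting any vertex of an optimal $8$-point set leaves an \emph{optimal} $7$-point set, hence one of the three arrangements classified in Theorem \ref{thm7}, and one then only has to check in which of those three a further point can be inserted at distance $\ge d$ from all seven, and that this insertion is essentially unique (it is the ``free disk'' slot of Figure \ref{fig:opt7_1}). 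You actually touch this idea when you remark that the eighth disk ``fits exactly into the cell left empty by the first seven,'' but you relegate it to a heuristic for the lower bound instead of letting it carry the whole proof. What your approach buys is independence from Theorem \ref{thm7}; what it costs is a full re-run of the enumeration and certified root isolation on $8$ vertices (subgraphs with $2\cdot 8-1=15$ edges), which is a substantially larger computation than the $N=6,7$ cases that the authors already describe as taking weeks, and which is entirely avoidable here. If you do pursue your route, you would also need to extend Lemma \ref{lm} (the existence of a cellular $(2N-1)$-edge subgraph with minimum degree $3$), which the paper only proves for $N=6$ and $N=7$.
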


\begin{conjecture} 
\label{conj9}
There is one unique, up to isometry, optimal arrangement of 9 points in ${\mathbb T}^2$, which is shown in the Figure \ref{fig:opt9} where $d = \frac{1}{\sqrt{5+2\sqrt{3}}}\approx0.3437$.
\end{conjecture}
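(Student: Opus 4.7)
The plan is to extend the strategy used for Theorems \ref{thm6}--\ref{thm8} to the case $N=9$: reduce the problem, via the notion of a \emph{toroidal irreducible contact graph}, to a finite case analysis amenable to computer enumeration. Given an optimal packing, let its contact graph $G$ have the nine disk centers as vertices and an edge for every pair at toroidal distance exactly $d$. Optimality forces $G$ to be irreducible in the sense that at every vertex the directions of the incident edges positively span $\mathbb{R}^2$; otherwise that vertex could be translated so as to strictly increase the minimum pairwise distance. Euler's formula on $\mathbb{T}^2$ then gives $|E| \leq 3|V| = 27$, and since an equilateral triangular sublattice of index $9$ in $\mathbb{Z}^2$ does not exist (the $60^\circ$ angle forces $\sqrt{3}$ into the lattice ratios, incompatible with the rational square lattice), the optimum contact graph must omit at least one edge of any would-be triangulation, i.e., have at least one face of size $\geq 4$.

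Next, I would enumerate all combinatorial types of toroidal irreducible contact graphs on nine vertices, using the same machinery the authors deploy for $N \leq 8$. For each such type $G$, the edges carry integer winding labels $k_{ij} \in \mathbb{Z}^2$ specifying the homotopy class of the realizing chord, and the edge-length equations $\lVert p_i - p_j + k_{ij}\rVert = d$, together with the inequalities $\lVert p_i - p_j + k\rVert \geq d$ for every non-edge pair and every $k \in \mathbb{Z}^2$, cut out (modulo $\mathrm{Isom}(\mathbb{T}^2)$) the moduli space of realizations of $G$. For each type I would determine, symbolically or via certified interval arithmetic, the supremum of $d$ over its realizations, and compare across the list. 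The conjectured value $d = 1/\sqrt{5 + 2\sqrt{3}}$ should emerge as an algebraic root of the edge-length system for one distinguished graph --- the one pictured in Figure \ref{fig:opt9} --- and uniqueness up to isometry amounts to showing that that system has a single $\mathrm{Isom}(\mathbb{T}^2)$-orbit of maximizers.

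The principal obstacle is the combinatorial explosion: the number of candidate irreducible graphs grows rapidly with $N$, and generating a provably complete list on nine vertices, while rigorously resolving the resulting polynomial systems (several of which will have high algebraic degree), is substantially more delicate than in the cases $N \leq 8$. A subtler difficulty, already present at $N = 7$, is the existence of graphs admitting one-parameter families of realizations (``free disks''): establishing that no such family matches or exceeds the conjectured $d$ for $N = 9$ requires additional, case-specific geometric arguments, and this is essentially why the statement remains a conjecture in the present paper.
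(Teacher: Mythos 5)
This statement is a \emph{conjecture} in the paper: the authors offer no proof, only the remark that for $N=9$ they ``present a result of numerical simulations.'' So there is no paper proof to compare against, and your text is, correctly, a proof \emph{plan} rather than a proof. As a plan it does follow essentially the same route the paper takes for $N=6,7,8$ (enumerate toroidal irreducible contact graphs, fix winding labels for the homology, solve the resulting edge-length systems by certified interval arithmetic, and separate subgraphs of the optimum from spurious candidates), and you correctly identify the two reasons the case $N=9$ is left open: combinatorial explosion of the enumeration and the existence of graphs with degenerate or positive-dimensional solution sets, which already required ad hoc treatment at $N=7$. Two small technical corrections if you pursue this. First, your stated irreducibility condition (the incident edge directions at each vertex positively span $\mathbb{R}^2$) only rules out moving a \emph{single} vertex; the paper's notion forbids any collective infinitesimal motion of the whole configuration, which is strictly stronger and is what the enumeration relies on. Second, the bound the paper's machinery actually exploits is the \emph{lower} bound $|E|\ge 2N-1$ (from Connelly), which lets them enumerate cellular subgraphs with exactly $2N-1=17$ edges and recover the remaining contacts afterwards; your upper bound $|E|\le 3|V|$ and the non-embeddability of the triangular lattice are true but play no role in making the enumeration finite or the systems square.
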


So, for $N=6, 7$ and $8$ we have a $5-7$\%--improvement compared to the results for the square.  The exact point coordinates can be found in the Appendix \ref{coords}.
\afterpage
{
	\begin{figure}[t]
	    \centering
	        \begin{subfigure}[b]{0.45\textwidth}
	                \centering
							    \fbox{\includegraphics[clip, trim = 1 1 3 3,scale=0.3]{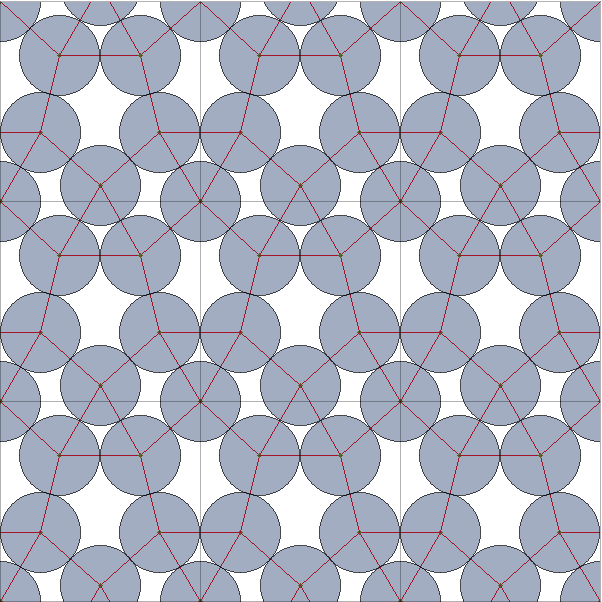}}
							    \caption[]{$N=6$, optimal}
	                \label{fig:opt6}
	        \end{subfigure}%
	        \begin{subfigure}[b]{0.45\textwidth}
	                \centering
	                \fbox{\includegraphics[clip, trim = 1 1 3 3,scale=0.3]{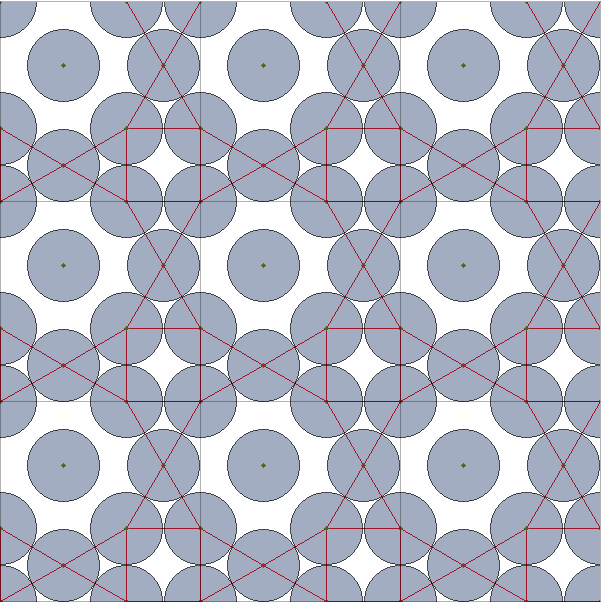}}
	                \caption{$N=7$, optimal}
	                \label{fig:opt7_1}
	        \end{subfigure}%
	        
	        \begin{subfigure}[b]{0.45\textwidth}
	                \centering
	                \fbox{\includegraphics[clip, trim = 1 1 3 3,scale=0.3]{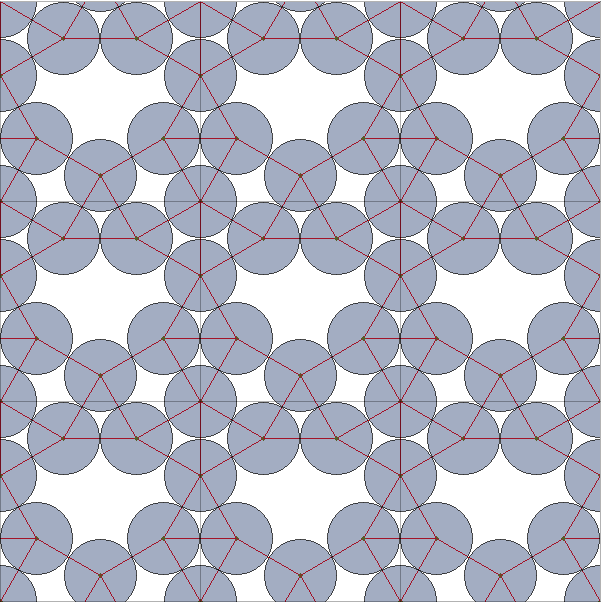}}
	                \caption{$N=7$, optimal}
	                \label{fig:opt7_2}
	        \end{subfigure}%
	        \begin{subfigure}[b]{0.45\textwidth}
	                \centering
	                \fbox{\includegraphics[clip, trim = 1 1 3 3,scale=0.3]{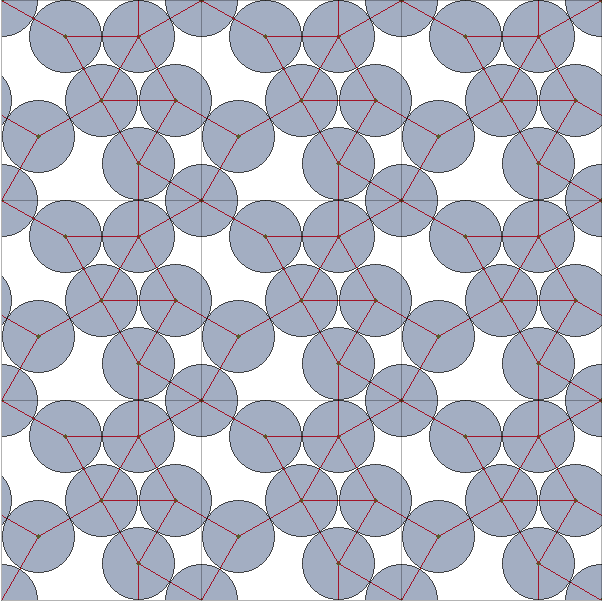}}
	                \caption{$N=7$, optimal}
	                \label{fig:opt7_3}
	        \end{subfigure}%
	        
	        \begin{subfigure}[b]{0.45\textwidth}
	                \centering
							    \fbox{\includegraphics[clip, trim = 1 1 3 3,scale=0.3]{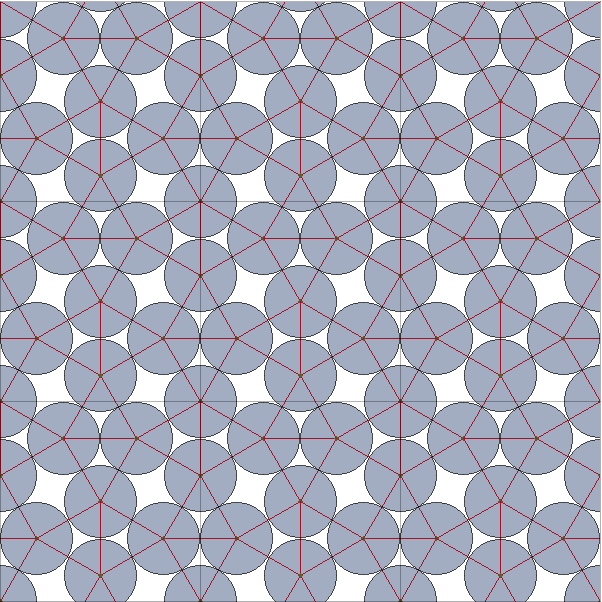}}
							    \caption{$N=8$, optimal}
							    \label{fig:opt8}
	        \end{subfigure}%
	        \begin{subfigure}[b]{0.45\textwidth}
	                \centering
							    \fbox{\includegraphics[clip, trim = 1 1 3 3,scale=0.3]{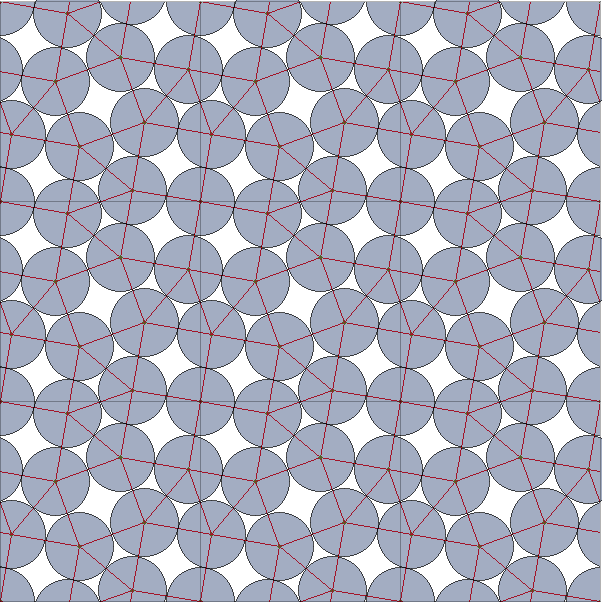}}
							    \caption{$N=9$, conjecture}
							    \label{fig:opt9}
	        \end{subfigure}%
	        \caption{Configurations for $N=6, 7, 8$ and $N=9$}
	        \label{fig:optima}
	\end{figure}%
	\clearpage
}

\section{Basic definitions and preliminary results}

The main notion we use to find an optimal configuration is the notion of a contact graph.

\medskip

\noindent{\bf Contact graphs.} Let $X$ be a finite set in $\mathbb
T^2$. The {\it contact graph} $\cg(X)$  is an embedded graph with vertices $X$ and geodesic edges $(x,y), \, x,y\in X$ such that $x$ and $y$ are adjacent iff $\dist(x,y) = \min\limits_{u,v\in X}\dist(u,v)$.

As a corollary of the definition, one sees that all the edges in a contact graph have equal lengths.

\medskip

We refer to the configuration of $N$ points that maximizes the minimal distance between the $N$ points, i.e., solves the packing problem, as {\it optimal}. To avoid dealing with an infinite family of torus isometries induced by parallel transports of the covering plane, from now on we suppose that every point set in question contains the vertex $(0,0)$. 

\medskip

\noindent{\bf Irreducible contact graphs.}
We say that a contact graph $\cg(X)$ (and its corresponding configuration $X$) is {\it irreducible} if it is rigid in a sense that there are no possible slight motions of vertices (we do not distinguish the points in $X$ and the vertices of the graph) that change the contact graph (either the graph itself or the topology of its embedding) or increase the edge lengths. A more detailed definition of the contact graph for the torus using so called {\it strut frameworks} and examples can be found in \cite{Dick} (there it is called ``infinitesimaly rigid strut framework''), which as they state is equivalent in our case to the definition in \cite{Con1}. The only difference between our definitions is that we allow an irreducible contact graph to have isolated vertices.

This definition can be applied to point configurations in any metric space. Note that W. Habicht, K. Sch\"utte,  B.L. van der Waerden, and L. Danzer  used irreducible contact graphs for the kissing number and Tammes problems for $\mathbb S^2$ \cite{HabvdW, SvdW1, SvdW2, Dan}. The idea of irreducibility is the expression of the thought that the optimal configuration should have some nice properties and there is a possibility to ``adjust'' any optimal configuration to have even more nice properties. Here we only want the configuration to be rigid, but, for example, Danzer \cite{Dan} used a trick to disallow the optimal graph to admit another adjustment that is now called ``Danzer flip'', and that helped him to solve the Tammes problem for 10 and 11 points. Also this trick helped to kill lots of graphs in \cite{Mus}.

Now we state some basic propositions that will be of significant use.

\begin{prop} Let $X$ be an optimal (corr. locally optimal, i.e., giving the local maximum of $d$ in the natural topology) configuration of $N$ points on a square flat torus. Then its contact graph is either irreducible, or $X$ can be continuously moved to another optimal (corr. locally optimal) configuration with an irreducible contact graph.
\end{prop}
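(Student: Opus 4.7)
The plan is to iteratively deform $X$ through the set of (locally) optimal configurations until an irreducible one is reached. Each step uses the definition of reducibility to produce an explicit infinitesimal motion, which is then integrated to a continuous path; termination follows from a dimension-count on the space of admissible motions.

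Concretely, suppose $\cg(X)$ is not irreducible. Then there is a non-trivial infinitesimal motion $v = (v_1,\dots,v_N)$ satisfying $\langle v_i - v_j,\, x_i - x_j\rangle \geq 0$ for every edge $(i,j) \in \cg(X)$, with at least one strict inequality (or else a motion that changes the embedding topology of the graph without shortening any edge). Integrating $v$ yields a smooth one-parameter family $X(t)$ with $X(0) = X$. By (local) optimality of $X$, the minimum pairwise distance $d(t)$ cannot exceed $d$ for small $t$; on the other hand, the strut inequalities force $d(t) \geq d$ to first order while all non-edge distances stay $> d$ by continuity. Hence $d(t) = d$ along the path, so every $X(t)$ is itself (locally) optimal. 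Let $t^* > 0$ be the first time at which a new pair of vertices reaches distance $d$ or at which the embedding topology of $\cg(X(t))$ changes. Then $X(t^*)$ is a (locally) optimal configuration whose contact graph encodes strictly more rigidity information than that of $X$.

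To make the phrase ``strictly more rigidity information'' precise, I would associate to every contact graph its cone of admissible strut motions in $\mathbb R^{2N}$ and show that this cone properly shrinks under the passage $\cg(X) \to \cg(X(t^*))$. The strict shrinking comes from the fact that the direction $v$ itself is no longer admissible for $\cg(X(t^*))$: by the choice of $t^*$ as a first-contact moment together with a transversality check, the new tight constraint introduced at time $t^*$ is violated infinitesimally in the direction $v$. Since the ambient space $\mathbb R^{2N}$ is finite-dimensional, this cone cannot shrink indefinitely; after finitely many iterations it reduces to the subspace spanned by translations of isolated vertices, which is precisely the irreducibility condition adopted in the paper.

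The main obstacle is establishing this strict-shrinking statement rigorously. Although intuitively natural, one must control the possibility that the motion loses edges as it gains new ones, and one must verify that $t^*$ is a genuinely transverse first-contact moment so that the newly tight constraint is independent of the surviving ones. This is a direct adaptation to the torus setting of the classical strut-rigidity arguments for Euclidean ball packings, and once it is set up carefully the inductive passage from $X$ to an irreducible configuration follows.
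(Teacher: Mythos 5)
The paper does not actually prove this proposition: it defers entirely to Connelly \cite{Con1} and to Theorem 3.2 of \cite{Dick}, so your attempt has to be judged on its own merits rather than against an in-paper argument. The first half of your sketch is essentially the standard and correct opening move, but you should commit to integrating the flex along straight lines, $x_i(t)=x_i+tv_i$. Then each strut length satisfies $|x_i(t)-x_j(t)|^2=d^2+2t\langle v_i-v_j,\,x_i-x_j\rangle+t^2|v_i-v_j|^2\ge d^2$ exactly, not merely ``to first order''; combined with (local) optimality this forces $d(t)=d$ for small $t$. Your hedge ``$d(t)\ge d$ to first order'' is not by itself sufficient, since a first-order flex could in principle shorten a strut at second order; the straight-line integration is precisely what rules that out, and you should say so.

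The genuine gap is the termination argument. First, the cone of admissible strut motions is not monotone along your deformation: any edge with $v_i\ne v_j$ is \emph{strictly} lengthened for all $t>0$, so for $0<t<t^*$ the contact graph of $X(t)$ is a proper subgraph of $\cg(X)$ and its strut cone is \emph{larger}; whether the single new contact gained at $t^*$ outweighs the edges lost along the way is exactly what needs proof, and nothing in your sketch excludes cycling among configurations. Second, even granting strict shrinking at each step, the claim that a cone in a finite-dimensional space ``cannot shrink indefinitely'' is false: strictly nested closed convex cones in $\mathbb R^{2N}$ form infinite descending chains, so finite dimension alone gives no termination. What is needed is a \emph{discrete} monotone invariant --- e.g.\ the dimension of the space of flexes, the rank of the rigidity/stress matrix, or a count of edges combined with a compactness argument on the set of optimal configurations, as in Connelly's treatment --- and identifying and controlling such an invariant is the actual content of the cited results. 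As written, your induction does not close, and you have also not addressed the second clause of the paper's irreducibility definition (motions that change the topology of the embedding without changing edge lengths), which the same machinery must handle.
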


Although this fact is easy to prove, we refer to \cite{Con1}. Also see Theorem 3.2 in \cite{Dick}.

\begin{prop} For any configuration $X$ its contact graph can have vertices of degree at most six.
\end{prop}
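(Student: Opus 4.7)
The plan is to reduce the proposition to the classical Euclidean fact that a point in the plane can have at most six neighbors at a prescribed distance $d$ while all pairwise distances among the neighbors remain $\geq d$, and then transfer this bound to the torus via the universal cover.

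First I would recall, from the definition of the contact graph, that every edge has length exactly $d := \min_{u,v\in X}\dist(u,v)$. Fix a vertex $v \in X$ and let $u_1,\dots,u_k$ be its neighbors in $\cg(X)$; then $\dist_{\mathbb{T}}(v,u_i)=d$ for all $i$ and $\dist_{\mathbb{T}}(u_i,u_j)\geq d$ for all $i\neq j$. I would then lift to the universal cover $\mathbb R^2\to\mathbb T^2$: pick any lift $\tilde v$ of $v$ and, for each neighbor $u_i$, let $\tilde u_i$ be the lift of $u_i$ that is closest to $\tilde v$. By the definition of the quotient metric, $|\tilde u_i - \tilde v|=d$ and $|\tilde u_i-\tilde u_j|\geq \dist_{\mathbb T}(u_i,u_j)\geq d$. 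The lifts $\tilde u_i$ are distinct because they project to distinct points $u_i$.

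Now the problem becomes purely Euclidean: $k$ distinct points lying on the circle of radius $d$ around $\tilde v$ are pairwise at distance $\geq d$. For any two such points $\tilde u_i, \tilde u_j$ the chord satisfies $|\tilde u_i-\tilde u_j|=2d\sin(\theta_{ij}/2)$, where $\theta_{ij}$ is the angle they subtend at $\tilde v$; the hypothesis $|\tilde u_i-\tilde u_j|\geq d$ forces $\theta_{ij}\geq \pi/3$. Ordering the $\tilde u_i$ cyclically around $\tilde v$ and summing the $k$ angles between consecutive neighbors gives $2\pi\geq k\cdot\pi/3$, so $k\leq 6$.

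There is no genuine obstacle here: the lifting step is routine because we only need one lift of $v$ and the closest lift of each neighbor, and the flatness of $\mathbb T^2$ guarantees that distances in the cover dominate torus distances. The only point worth noting carefully is that the bound $k\leq 6$ counts neighbors in the \emph{graph-theoretic} sense, so even when the injectivity radius of the torus is smaller than $d$ (as may happen for very small $N$), the argument is unaffected: the distances on the torus are the hypotheses, and the lifting merely provides a convenient Euclidean picture in which to do the angle count.
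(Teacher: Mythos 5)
Your argument is correct and follows essentially the same route as the paper: lift to the universal cover $\mathbb R^2$ and invoke the planar fact that at most six points at distance exactly $d$ from a center can be pairwise at distance $\geq d$ (the $\pi/3$ angle count). The paper states this in one line as ``a circle in the plane cannot touch more than six congruent circles''; you have merely supplied the standard details.
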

\begin{proof}It is obvious since any circle in the plane can not touch more than 6 circles of the same radius, and since the universal cover of the contact graph is the contact graph in the plane, the property holds.\end{proof}

\begin{prop} For any configuration $X$ its contact graph can not have angles between adjacent edges below $\frac\pi3$.
\end{prop}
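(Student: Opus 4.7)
The plan is to argue by contradiction using the law of cosines after lifting a neighborhood of the offending vertex to the universal cover $\mathbb{R}^2$. Suppose that some vertex $v$ of $\cg(X)$ has two adjacent vertices $u,w$ such that the angle $\alpha$ between the geodesic edges $vu$ and $vw$ at $v$ is strictly less than $\pi/3$. Both of these edges have length equal to the common edge length $d = \min_{p\neq q\in X}\dist(p,q)$.

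First I would lift the picture: pick a lift $\tilde v\in\mathbb{R}^2$ of $v$ and, starting at $\tilde v$, lift each of the two geodesic edges to straight segments of length $d$ in $\mathbb{R}^2$, obtaining lifts $\tilde u$ and $\tilde w$ of $u$ and $w$, respectively. The angle between these two planar segments at $\tilde v$ is exactly $\alpha$, so by the Euclidean law of cosines
\[
|\tilde u - \tilde w|^2 = 2d^2(1-\cos\alpha) < 2d^2\cdot\tfrac12 = d^2,
\]
hence $|\tilde u - \tilde w| < d$. Next I would invoke the elementary fact that the toroidal distance between $u$ and $w$ is bounded above by the Euclidean distance between any pair of lifts, so $\dist(u,w)\le |\tilde u-\tilde w|<d$. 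Since $u,w\in X$ and $u\neq w$, this contradicts the definition of $d$ as the minimal pairwise distance.

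There is essentially no obstacle here: the only subtlety is to confirm that the two edges and the putative short segment $\tilde u\tilde w$ genuinely form a planar triangle, i.e., that we may do the law-of-cosines computation in $\mathbb{R}^2$ rather than on $\mathbb{T}^2$. This is immediate because we only used one lift of $v$ and then lifted two geodesic arcs emanating from that single point, which is always possible in the universal cover regardless of the size of $d$. The case $u=w$ (which could only arise if the two edges are actually the same geodesic loop traversed in two directions) is harmless: then $\alpha$ is either $0$ or $\pi$ and the bound $\alpha\ge\pi/3$ holds trivially. This completes the argument.
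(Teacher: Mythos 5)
Your proof is correct and is simply a fully spelled-out version of the paper's one-line argument that ``an angle must be not less than in an equilateral triangle'': lifting to the universal cover and applying the law of cosines to show the third side would be shorter than $d$ is exactly the idea the authors leave implicit. No substantive difference in approach.
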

\begin{proof}It is obvious since an angle must be not less than in an equilateral triangle.\end{proof}

\begin{prop} There are no angles greater or equal than $\pi$ in an irreducible configuration.\end{prop}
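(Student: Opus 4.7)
The plan is to argue by contradiction: assume some vertex $v$ of the contact graph has two cyclically consecutive incident edges $vu_1,vu_2$ with angle $\geq \pi$ between them (measured on the side of the gap). Since the angles around $v$ sum to $2\pi$, this means that all neighbors of $v$, when viewed in a small neighborhood lifted to the universal cover $\mathbb R^2$, lie in a closed half-plane $H$ bounded by a line $\ell$ through $v$. I will then produce an explicit perturbation of $v$ that strictly increases every incident edge length, contradicting irreducibility.

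Concretely, I will let $n$ be a unit vector perpendicular to $\ell$ pointing away from $H$, lift the local picture to the plane, and replace $v$ by $v+\varepsilon n$ for small $\varepsilon>0$. For each neighbor $u$ of $v$, the expansion
\[
|u-v-\varepsilon n|^2 = |u-v|^2 - 2\varepsilon\langle u-v,n\rangle + \varepsilon^2
\]
shows that the new squared edge length is at least $|u-v|^2+\varepsilon^2$ whenever $\langle u-v,n\rangle\leq 0$, which holds by construction. Hence every incident edge from $v$ strictly grows. If the angle is strictly greater than $\pi$, all neighbors lie in the open half-plane and each distance already grows to first order; if the angle equals $\pi$, the two bounding edges lie along $\ell$ and their distances increase at second order $\varepsilon^2/(2|vu_i|)$, while the remaining edges (which must lie strictly off $\ell$, otherwise the angle would exceed $\pi$) still grow to first order. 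Either way, the perturbation is admissible for $\varepsilon$ small enough, since all other pairwise distances depend continuously on the position of $v$ and the non-incident distances are strictly larger than the minimum by definition of a contact graph, so no new contacts arise. This contradicts irreducibility of $\cg(X)$.

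The degenerate low-degree cases fit the same framework: a vertex of degree $0$ is vacuously fine, while a vertex of degree $1$ corresponds to a full $2\pi$ gap, which the argument above immediately eliminates (move $v$ directly away from its unique neighbor). Multi-edges between the same pair of torus points cause no issue, because in the universal cover they lift to distinct line segments and the local computation above is purely planar.

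I do not expect a major obstacle here: the only mildly delicate point is the borderline case of angle exactly $\pi$, where the first-order variation of the two bounding edges vanishes and one must invoke the $\varepsilon^2$ term to conclude a genuine increase. Handling this carefully, together with noting that small enough $\varepsilon$ preserves the non-edge distances strictly above the contact length, completes the proof.
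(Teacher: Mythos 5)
Your proposal is correct and is essentially the paper's own argument worked out in full: the paper simply says the vertex with an angle $\geq\pi$ can be slightly pushed into the face containing that angle so that all incident distances grow and the vertex becomes isolated, which contradicts irreducibility. Your explicit computation, including the second-order treatment of the borderline angle $\pi$ and the continuity argument for non-incident distances, is a faithful and careful elaboration of that same perturbation.
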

\begin{proof}If there is an angle of $\pi$ or greater, than the corresponding vertex can be slightly ``pushed'' inside a face with that angle, and it becomes isolated.\end{proof}

\begin{cor} There are no vertices of degree one or two in an irreducible configuration.\end{cor}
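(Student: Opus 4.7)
The plan is to rule out each forbidden degree by exhibiting a local perturbation of the offending vertex $v$ that strictly lengthens at least one edge of $\cg(X)$, contradicting irreducibility. This is the same ``push'' technique used in the proof of Proposition 4, now combined with the upper angle bound that Proposition 4 itself supplies. In each case only $v$ moves, so the edges not incident to $v$ remain unchanged; and because the finitely many non-edge distances are bounded away from $d$, they stay strictly above $d$ under any sufficiently small perturbation of $v$.

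If $\deg v = 1$, let $u$ be its unique neighbor; a small displacement of $v$ in the direction of $v-u$ strictly lengthens the unique edge at $v$, and for small enough displacement creates no new short distance elsewhere. If $\deg v = 2$, let $u_1,u_2$ be the two neighbors and $\theta=\angle u_1 v u_2$. Proposition 4 gives $\theta<\pi$, so the open cone $C=\{\varepsilon\in\mathbb R^2 : \varepsilon\cdot(v-u_1)>0 \text{ and } \varepsilon\cdot(v-u_2)>0\}$ -- the cone opposite to the sector spanned by the two edges at $v$ -- is nonempty, of angular width $\pi-\theta$. Any sufficiently small $\varepsilon\in C$ simultaneously lengthens both edges at $v$ to first order, while keeping every other pairwise distance above $d$.

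I do not foresee any real obstacle: once Proposition 4 is in hand, the corollary is essentially immediate from a first-order analysis of the edge-length functions, with a routine bookkeeping check that the perturbation creates no new short non-edge distance (which follows because there are only finitely many other pairs, all at distance strictly greater than $d$). The only mildly delicate point -- and it is not really delicate -- is reading off the correct cone from the angle bound $\theta<\pi$.
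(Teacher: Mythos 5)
Your proof is correct and matches the paper's intent: the corollary is stated without proof precisely because it is meant to follow from Proposition~4, and your argument is a fleshed-out version of the same ``push the vertex until it becomes isolated'' idea. Note, though, that your degree-2 case does more work than necessary --- the two angles at a degree-2 vertex sum to $2\pi$, so one of them is automatically $\geq\pi$ and Proposition~4 yields the contradiction immediately; your explicit cone construction essentially re-proves Proposition~4 at that vertex rather than just citing it.
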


\begin{prop} An irreducible contact graph without isolated vertices is cellular (i.e., the interior of each face is homeomorphic to an open disk).\end{prop}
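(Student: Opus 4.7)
The plan is to apply Gauss--Bonnet to each face of $G$ together with the strict angle bound from Proposition~4. Fix a face $F$ and let $\theta_1,\ldots,\theta_k$ be its interior corner angles, corners counted with multiplicity along the boundary walks of $F$ (so if $F$ meets a vertex $v$ in several angular sectors, each sector contributes one corner). Since the flat torus has zero Gaussian curvature and the edges of $G$ are geodesic segments, Gauss--Bonnet applied to $\overline F$ reduces to
\[
\sum_{i=1}^{k}(\pi-\theta_i)=2\pi\chi(F),\qquad\text{i.e.,}\qquad\sum_{i=1}^{k}\theta_i=(k-2\chi(F))\pi.
\]
By Proposition~4 each $\theta_i<\pi$, and $k\ge 1$ because $G$ is nonempty and has no isolated vertices, so every face has at least one corner on its boundary walks. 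The left-hand side of the identity is therefore strictly less than $k\pi$, which forces $\chi(F)>0$.

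Since $F$ is a connected open subset of the orientable surface $\mathbb T^2$ with $\partial F\ne\emptyset$, it is a connected orientable open $2$-manifold that is not closed, so the classification of surfaces gives $\chi(F)\le 1$. Combined with $\chi(F)>0$ this yields $\chi(F)=1$, and the only connected orientable open $2$-manifold with Euler characteristic $1$ is the open disk. Thus $F$ is homeomorphic to an open disk, i.e., the embedding of $G$ is cellular.

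The main subtlety I expect to navigate is the validity of Gauss--Bonnet when the boundary walk of $F$ is not a simple closed curve --- for example when an edge of $G$ is bounded by $F$ on both sides, or when $F$ occupies several angular sectors at a single vertex $v$. In those situations each traversal of a vertex along the boundary walk must be counted as a distinct corner; each such angle is still a sector angle between two consecutive edges of $G$ at $v$, hence strictly less than $\pi$ by Proposition~4. One then has to verify the identity $\sum(\pi-\theta_i)=2\pi\chi(F)$ in this generality via a CW decomposition of $\overline F$ (with $\chi(F)$ interpreted combinatorially) rather than a manifold-with-boundary structure. Getting this bookkeeping right is the main technical point the full proof must address; once this is done, the angle bound plus Gauss--Bonnet gives the conclusion immediately.
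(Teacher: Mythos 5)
Your proof is correct, and it takes a more self-contained route than the paper does. The paper's own proof is a one-line contrapositive --- ``if the graph is not cellular, one can easily see that it has a flat or reflex angle,'' contradicting Proposition 4 --- with the actual verification deferred to the reference [Dick]. You supply the mechanism explicitly: combinatorial Gauss--Bonnet applied to the abstract completion $\overline F$ of each face, using that the torus is flat and the edges are geodesics, so that $\sum_i(\pi-\theta_i)=2\pi\chi(F)$; the strict bound $\theta_i<\pi$ from Proposition 4 then forces $\chi(F)>0$, and $\chi\le 1$ for a compact connected orientable surface with nonempty boundary pins down the disk. The key ingredient (no angle $\ge\pi$) is the same in both arguments, so the proofs are cousins rather than strangers, but yours replaces ``one can easily see'' with an actual global identity and correctly flags the only delicate point --- counting corners with multiplicity along non-simple boundary walks (e.g., an edge bordered by $F$ on both sides). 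One tiny remark: you should note in passing that every boundary walk does meet a vertex (edges have endpoints), which you do, and that $\overline F$ is compact and orientable because $G$ is finite and $\mathbb T^2$ is orientable; with those observations the argument is complete. The trade-off is length versus self-containment: the paper's version is shorter but relies on an external source, while yours is longer but fully verifiable on the spot and generalizes verbatim to any flat surface with geodesic edges.
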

\begin{proof} If not, then one can easily see that such graph has an obtuse or a flat angle. For the details see \cite{Dick}.\end{proof}

\begin{prop} An isolated vertex in the contact graph can be only inside a face, whose universal cover is a polygon of at least seven vertices. (The universal cover is referred to for the strictness, as there can be, for example, ``octagonal'' faces on the torus with only 4 distinct vertices. Nevertheless, in the following we will call such faces just ``octagons''.)
\end{prop}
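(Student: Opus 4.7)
The plan is to lift the face $F$ containing the isolated vertex $v$ to its universal cover in $\mathbb R^2$. By the preceding cellularity proposition (applied to the contact graph with isolated vertices removed), $F$ lifts to a simply connected polygonal region $\tilde F\subset\mathbb R^2$ with $k$ boundary vertices $v_1,\dots,v_k$, each edge of length $d$. Proposition~4 (no interior angles $\geq\pi$) implies that $\tilde F$ is convex, and $v$ has a lift $\tilde v$ in the interior of $\tilde F$. Since $v$ is isolated, $\dist(v,v_i)>d$ in the torus, and because the torus distance bounds every lift distance from below, $|\tilde v-v_i|>d$ in $\mathbb R^2$ for every $i$.

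The core step is a one-line planar lemma: if $|AB|=d$ and $|PA|,|PB|>d$, then $\angle APB<\pi/3$. By the law of cosines and the AM-GM inequality,
$$\cos\angle APB=\frac{|PA|^2+|PB|^2-d^2}{2|PA|\,|PB|}\geq 1-\frac{d^2}{2|PA|\,|PB|}>\frac12.$$
Applied to each consecutive pair $(v_i,v_{i+1})$, which are joined by a polygon edge of length $d$, this gives $\angle v_i\tilde v v_{i+1}<\pi/3$ for every $i$. Because $\tilde v$ lies in the interior of the convex polygon $\tilde F$, the angles sum to a full turn, so
$$2\pi=\sum_{i=1}^{k}\angle v_i\tilde v v_{i+1}<\frac{k\pi}{3},$$
and therefore $k>6$, i.e.\ $k\geq 7$.

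I expect no substantive obstacle: once one passes to the universal cover and isolates the trigonometric lemma, the argument is essentially three lines. The only points that need checking are the convexity of $\tilde F$ (needed so that the angles at an interior point sum to $2\pi$) and the fact that both inequalities are strict (needed so that the regular hexagon, where all distances to the center equal exactly $d$, is correctly excluded). Both follow immediately from earlier propositions and the definition of an isolated vertex, and they show that the bound $k\geq 7$ is sharp, with the regular hexagon as the borderline obstruction.
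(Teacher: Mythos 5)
Your proof is correct and is essentially the paper's own argument: join the isolated vertex to the face's vertices, note each such segment is strictly longer than $d$ while each polygon edge has length exactly $d$, conclude each subtended angle is strictly less than $\pi/3$, and sum to $2\pi$ to force at least seven vertices. The paper states this in three lines without the law-of-cosines computation; your added details (the explicit trigonometric lemma and the remark that the regular hexagon is the borderline case) only make the same argument more explicit.
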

\begin{proof}If we draw all straight-line segments from the isolated vertex $v$ to all the vertices of the face, all of the segments will be strictly longer than the edges, thus the angles are strictly less than $\frac\pi3$. Their sum is $2\pi$, so there must be at least 7 such angles. By the way, if there is a vertex inside a hexagon, then it is a regular hexagon split into 6 equilateral triangles.\end{proof}

{\noindent \bf Remark}. If we remove an isolated vertex from the contact graph on $N$ vertices, then we will get a locally optimal configuration on $N-1$ vertices.

\begin{prop} \label{edgebnd} An irreducible contact graph without isolated vertices has at least $2N-1$ edges.\end{prop}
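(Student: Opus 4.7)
The plan is to use the strut-framework interpretation of irreducibility, together with a short dimension count on the rigidity matrix. I view $\cg(X)$ as a strut framework on the flat torus: each contact edge is a strut, which under an infinitesimal motion $u\colon X \to \mathbb{R}^{2}$ may only expand or stay at the same length. By the authors' definition (which by \cite{Dick}, \cite{Con1} is equivalent to infinitesimal rigidity of this strut framework), irreducibility says that the only motions $u$ satisfying $\langle u(y)-u(x),\, y-x\rangle \geq 0$ for every edge $(x,y)$ are the trivial ones.

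Next I would form the rigidity matrix $R \in \mathbb{R}^{E\times 2N}$ whose rows are these first-order length-change functionals. On the flat torus the only continuous isometries are translations, so the space of trivial motions is $2$-dimensional and lies inside $\ker R$; in particular $\operatorname{rank}(R) \leq 2N-2$. The hypothesis that $\cg(X)$ has no isolated vertices is used precisely here, to ensure every vertex appears in at least one row and that no extra trivial dimensions beyond the two global translations sneak in.

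Then I would invoke the Roth--Whiteley characterization of infinitesimally rigid tensegrities, specialized to all-strut frameworks: $(G,p)$ is infinitesimally rigid iff (i) the underlying bar framework is infinitesimally rigid, equivalently $\operatorname{rank}(R)=2N-2$; and (ii) there exists a strictly positive self-stress, i.e.\ an $\omega\in\mathbb{R}_{>0}^{E}$ with $R^{T}\omega=0$. Statement (i) is the classical bar-rigidity condition, while (ii) arises from a Farkas/Gordan theorem of the alternative applied to the strut inequalities. The bound then follows by counting: (ii) forces $\dim\ker R^{T}\geq 1$, and $\dim\ker R^{T}=E-\operatorname{rank}(R)$, so using (i),
$$E \;=\; \operatorname{rank}(R)\,+\,\dim\ker R^{T} \;\geq\; (2N-2)+1 \;=\; 2N-1.$$

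The only real technical obstacle is verifying that the Roth--Whiteley equivalence carries over from the Euclidean setting to the flat torus. This is handled by the same local argument as in $\mathbb{R}^{2}$, once one notes that the continuous isometry group of $\mathbb{T}^{2}$ is exactly the $2$-dimensional group of translations (there is no rotational degree of freedom), which accounts precisely for the $-2$ in condition (i). Everything else reduces to basic linear algebra, and the cellularity statement of the previous proposition (used to guarantee the framework is genuinely embedded, hence the rigidity matrix is well-defined) is already established.
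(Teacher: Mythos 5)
Your argument is correct, and it is in substance the proof of the result that the paper itself does not prove: the paper's ``proof'' of this proposition is a one-line deferral to Connelly \cite{Con2}, \cite{Con3}, and Connelly's bound (at least $2N-2+1$ contacts for a rigid packing on a torus) is obtained exactly by your count --- infinitesimal rigidity of the strut framework implies infinitesimal rigidity of the underlying bar framework, so $\operatorname{rank}(R)=2N-2$ because the trivial motions of $\mathbb{T}^2$ are the two-dimensional translations, while a theorem of the alternative yields a nonzero self-stress, whence $\dim\ker R^{T}\geq 1$ and $E\geq 2N-1$. Two minor points: for the counting step you only need a \emph{nonzero} nonnegative self-stress, which already follows from Gordan's theorem (if none existed there would be a motion strictly lengthening every strut, hence nontrivial), so the full strictly-positive clause of Roth--Whiteley is more than is required; and cellularity is not what makes the rigidity matrix well defined --- one only needs the geodesic edges together with their wrapping vectors in $\mathbb{Z}^2$, which is already part of the data of the embedded contact graph. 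With those cosmetic caveats, your proposal fills in precisely the argument the paper outsources to the literature.
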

\begin{proof} We refer to the result of Connely \cite{Con2}, \cite{Con3}. Some general position ideas that can be used to prove this will be given in the next sections.\end{proof}

Now we state some propositions involving the $d$ we are looking for.

\begin{prop} Let $d(N)$ denote a solution to the packing problem for $N$ points. Then $d(N)$ is decreasing.\end{prop}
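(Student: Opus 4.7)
The plan is to prove $d(N+1) \le d(N)$ for every $N \ge 1$ by the simplest possible subset argument; no contact-graph machinery is needed.

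First, I would fix an optimal $(N+1)$-point configuration $X_{N+1}\subset\mathbb{T}^2$, which exists by compactness of $\mathbb{T}^2$ and continuity of the minimum-pairwise-distance functional on the closed space of valid configurations. By definition, the minimum pairwise geodesic distance realised on $X_{N+1}$ equals $d(N+1)$.

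Next, I would choose any point $v\in X_{N+1}$ and pass to the subconfiguration $X_N := X_{N+1}\setminus\{v\}$, which has cardinality $N$. Because deleting a point only removes pairs from the min-distance computation and never introduces new ones, the minimum pairwise distance in $X_N$ is at least the minimum pairwise distance in $X_{N+1}$, i.e.\ at least $d(N+1)$. Thus $X_N$ is a valid $N$-point arrangement achieving a minimum separation at least $d(N+1)$, and by the definition of $d(N)$ as the supremum over all such arrangements we obtain $d(N)\ge d(N+1)$.

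The only subtlety — really a caveat rather than an obstacle — is the word ``decreasing'': the argument above yields only \emph{non-strict} decrease, and strict decrease in fact fails in general. Theorems~\ref{thm7} and~\ref{thm8} supply the explicit example $d(7)=d(8)=\tfrac{1}{1+\sqrt{3}}$, realised precisely by deleting a single vertex from the optimal $8$-point arrangement to obtain one of the three optimal $7$-point arrangements. Hence the proposition must be understood in the weak (non-increasing) sense, and no further argument beyond the one-line deletion above is needed.
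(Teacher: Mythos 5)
Your proof is correct and is essentially identical to the paper's one-line argument: delete any vertex from an optimal configuration to obtain a valid configuration on fewer points. Your added remark that ``decreasing'' must be read as non-increasing (witnessed by $d(7)=d(8)=\frac{1}{1+\sqrt{3}}$) is a fair and accurate clarification, but the approach is the same.
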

\begin{proof} One can remove any vertex to achieve a configuration on $N-1$ points.\end{proof}

\begin{prop} The following inequality holds: $d(N)\leq\dfrac{2}{\sqrt{N\sqrt{12}}}$.\end{prop}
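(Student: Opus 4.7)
The plan is to reduce this to the classical density bound for planar packings of congruent disks. First I would restate the problem in terms of disks: a configuration with pairwise distances $\geq d$ corresponds to a packing of $N$ open disks of radius $r = d/2$ on $\mathbb{T}^2$ (their interiors being pairwise disjoint). Since the area of the torus is $1$, the packing density is exactly $\rho = N\pi r^2$, as already noted in the Preface.

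Next I would invoke the Thue--Fejes T\'oth theorem: the supremum of the density of any packing of congruent open disks in the Euclidean plane equals $\pi/\sqrt{12}$, achieved by the hexagonal lattice packing. The bridge from the torus to the plane is the universal cover $\mathbb{R}^2 \to \mathbb{T}^2 = \mathbb{R}^2/\mathbb{Z}^2$: pulling back a packing on $\mathbb{T}^2$ yields a $\mathbb{Z}^2$-periodic packing of $\mathbb{R}^2$ by disks of the same radius $r$, and because a fundamental domain has unit area the density of the lifted packing equals the density $\rho$ on the torus. Hence $N\pi r^2 \leq \pi/\sqrt{12}$.

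Solving for $r$ gives $r \leq 1/\sqrt{N\sqrt{12}}$, and since $d = 2r$ we obtain
\[
d(N) \;\leq\; \frac{2}{\sqrt{N\sqrt{12}}},
\]
which is the claimed inequality.

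The only subtle point, which I would be careful to state cleanly, is the density comparison between torus and plane: one must verify that the lifted periodic packing indeed has the same density as the packing on the torus (a standard fundamental-domain argument), and that periodic packings are subject to the same Thue--Fejes T\'oth bound as arbitrary ones. Neither step is an obstacle --- both are standard --- so I do not anticipate any real difficulty; the proposition is essentially a one-line corollary of the optimal hexagonal density.
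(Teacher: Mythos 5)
Your proof is correct and follows exactly the route the paper intends: the paper's own proof is a one-line citation of the maximal planar packing density $\pi/\sqrt{12}$ (Fejes T\'oth), deferring details to Dickinson et al., and your lift-to-the-universal-cover argument with the fundamental-domain density comparison is precisely the standard filling-in of that citation. The algebra $N\pi r^2 \leq \pi/\sqrt{12}$, $r = d/2$ gives the stated bound, so nothing is missing.
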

\begin{proof} It follows from the knowledge of the maximal density of circle packing on a plane, which is $\dfrac{\pi}{\sqrt{12}}$ (\cite{Fejes}, \cite{FeT}). See \cite{Dick}.\end{proof}

\begin{prop} There are no multiple edges in the contact graph for $N\geq 5$ points.\end{prop}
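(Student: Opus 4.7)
The plan is to show that the mere presence of a multiple edge forces the minimum interpoint distance $d$ to be at least $1/2$, and then to contradict this using the density bound $d(N)\leq 2/\sqrt{N\sqrt{12}}$ once $N\geq 5$.

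First I would interpret a multiple edge combinatorially on the universal cover. Two distinct edges joining the same pair of vertices $x,y\in X$ correspond to two distinct geodesic segments of length $d$ connecting $x$ and $y$ in $\mathbb{T}^2$. Fixing a preimage $\tilde{x}$ of $x$ in $\mathbb{R}^2$, these two geodesics lift to segments ending at two distinct preimages $\tilde{y}+k_1$ and $\tilde{y}+k_2$ of $y$, where $k_1,k_2\in\mathbb{Z}^2$ with $k_1\neq k_2$, satisfying $|\tilde{y}+k_i-\tilde{x}|=d$ for $i=1,2$.

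Next I would extract a lattice inequality. Setting $w_i=\tilde{y}+k_i-\tilde{x}$, we have $|w_1|=|w_2|=d$, while $w_1-w_2=k_1-k_2$ is a nonzero element of $\mathbb{Z}^2$ and hence has Euclidean norm at least $1$. The triangle inequality then yields $1\leq|k_1-k_2|\leq|w_1|+|w_2|=2d$, so $d\geq 1/2$.

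Finally I would contradict this with the previously established bounds. By monotonicity, for any configuration on $N\geq 5$ points the minimum distance satisfies $d\leq d(N)\leq d(5)\leq 2/\sqrt{5\sqrt{12}}$, and a quick estimate shows $2/\sqrt{5\sqrt{12}}<1/2$ (equivalently, $N\sqrt{12}>16$ holds for every $N\geq 5$, as $16/\sqrt{12}<5$). This contradicts the lower bound $d\geq 1/2$ derived from a multiple edge. I do not expect any real obstacle; the content of the proof is simply the observation that a multiple edge on the square torus requires the shortest nonzero vector of $\mathbb{Z}^2$ (of length $1$) to fit within a disk of diameter $2d$, and the density estimate rules this out past $N=4$.
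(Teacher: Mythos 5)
Your proposal is correct and follows essentially the same route as the paper: a multiple edge lifts to two segments of length $d$ from a fixed preimage to two distinct lattice translates, forcing $2d\geq 1$, which contradicts $d(N)\leq d(5)<\tfrac12$ for $N\geq 5$. The only cosmetic difference is that you derive $d(5)<\tfrac12$ from the density bound $2/\sqrt{5\sqrt{12}}$ while the paper cites the exact value $d(5)=\tfrac{\sqrt5}{5}$; both suffice.
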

\begin{proof} There are no multiple edges in the universal cover since any two disks on the plane cannot touch in two points. So the multiple edges $e_1=(uv)$ and $e_2=(vu)$ may occur only if the cycle $uvu = e_1e_2e_1$ wraps around the torus (i.e., is not homologically trivial). But then the length of the edge must be at least $\frac 12$, which is not possible as $d(5)=\frac{\sqrt{5}}{5}<\frac 12$ (\cite{Dick}) and $d(N)$ decreases.\end{proof}

\section{The idea of the proof}

Now we have some basic properties of contact graphs, and so we are going to use a computer to find the one that will give the optimal configuration, among all the graphs with such properties; the idea is similar to the one used for solving the Tammes problem in \cite{Mus}. There, to find an optimal configuration on the sphere, all the planar graphs with some necessary properties have been enumerated and tried to be embedded into a sphere with equal edge lengths, satisfying the properties of contact graphs. The enumeration has been done by a program {\ttfamily plantri} by G.~Brinkmann and B.~McKay(\cite{plantri}).

We've used the program {\ttfamily surftri} written by T.~Sulanke (\cite{surftri}), who very kindly modified the original release, that enumerated only the triangulations, to be able to enumerate all toroidal cellular graphs, and we are very grateful to him for that.

But unlike \cite{Mus}, where all planar graphs have been enumerated and the possibility of irreducible embedding has been checked, we've used a slightly different approach. The idea grows from the fact that since we are unable to find an embedding of the graph exactly, in any case it would be very hard to distinguish the graph and its subgraphs programmatically. Here are the details.

Suppose we have an embedding of the graph $G$ without isolated vertices, like in any of the figures (except for Figure \ref{fig:opt7_1}) above. Then, considering each edge as a vector, we may write the following system of equations that characterizes the embedding (up to a parallel transport that is out of interest for us):
\begin{equation}
\label{syst}
\left\{\begin{array}{ll}
|v_i|^2 = d^2 & i = 1..E \\
\sum\limits_{v_i \text{ is an edge of } \mathrm{f}}\,\,\pm v_i = 0 & \mathrm{f}\text{ is a face of } G \\
\sum\limits_{v_i \text{ is an edge of } c_1} \pm v_i = \mathrm{const}_1 & \\
\sum\limits_{v_i \text{ is an edge of } c_2} \pm v_i = \mathrm{const}_2. &
\end{array}\right.
\end{equation}
Here we assign a vector $v_i$ to each edge of $G$, fixing any of the two possible directions of the edge. Then the first $E$ equations tell that all edge lengths are equal to $d$. The next $F$ equations say that each face is really a face, i.e., the sum of the vectors corresponding to its edges (with appropriate signs) equals to zero. The last two equations ``fix the cut'' of the torus: we choose any two cycles $c_1$ and $c_2$ that do not form a face (i.e., homologically independent) and prescribe for each of them the way it wraps around the torus. Since it is a cycle, then the sum of the vectors, corresponding to its edges, has integer coordinates, and based on the number of the edges in the cycle, we easily establish that there is only a finite number of possible values of $\mathrm{const}_1$ and $\mathrm{const}_2$ if the cycle lengths are bounded. The details will follow in lemmas \ref{lc1}--\ref{lc4}. An example of the system for a specific graph is System (\ref{syststb}) on page \pageref{syststb}.

\noindent{\bf Remark.} It is well-known that a graph embedded on a surface is unique up to orientation-preserving homeomorphism of the surface characterized by its rotation system --- a cyclic ``clockwise'' order in each vertex of the out-going edges. And {\ttfamily surftri} outputs these rotation systems. However, there are many homeomorphisms of the torus from $\mathrm{SL}_2(\mathbb Z)$ that do not conserve lengths, and so we have to somehow ``fix the homeomorphism'', and in the previous system it is done by selecting the two exclusive cycles and fixing them. The details on that will be below.

Note that the equations on the faces are not linearly independent (except the case where there is only one face, which is trivially impossible with our restrictions on degrees of vertices and without multiple edges): each edge is counted twice, once in positive direction and once in the negative, so the sum of all these equations is zero. Removing one of the equations for the faces, and, rewriting the system in coordinates, we get a system of $m$ equations in $n=2E+1$ variables (coordinates and $d$), where $$m=E+2(F-1)+2\cdot2=E+2F+2.$$ By the Euler formula for a torus, we have a relation $N+F=E$, so $$m=E+2(E-N)+2=3E-2N+2.$$ If a solution exists, by the reasons of general position, we may hope to be able to find it by only considering only $n$ of $m$ equations. On the geometrical level, it may be the same as considering the subgraph instead of the original, hoping to get the removed edges automatically. Taking $m=n$ for a subgraph we find that $E=2N-1$. By Proposition \ref{edgebnd}, the contact graph has at least such number of edges, so there exists its subgraph having exactly $2N-1$ edges\footnote{By the way, the same reasons of general position help us think that if there are less then $n$ equations, i.e., less than $2N-1$ edges, then the space of solutions has positive dimension. Thus there is an infinitesimal motion possible and a graph with less than $2N-1$ edges could not be an irreducible contact graph.}. So, our idea is to consider only the graphs with $2N-1$ edges and to find their embeddings.

\noindent{\bf Remark.} Although the system of equations is polynomial and there is much known about them, algebraic surfaces given by these equations seem not to be in the general position in the whole space, and such a system usually has infinite number of (complex) solutions, so the known approaches for polynomial systems fail.

\section{Technical details}

Now we need some statements to guarantee that the code for solving the problem works.

\noindent{\bf Notation:} For an (oriented) cycle $c$ we write $l(c) := \sum e_i$, where $e_i$ are the vectors $v_i$ from System (\ref{syst}), corresponding to the edges of the cycle, with appropriate signs (that are determined by the orientation).

\noindent{\bf Remark.} In the following, we refer to a pair of (oriented) cycles, a cut along which unwraps the torus to a flat polygon, as ``homologically independent cycles'', and to a cycle, that forms a face, as ``trivial cycle''.

\begin{lemma}\label{lc1} If $d\leq d(5) = \frac {\sqrt{5}}{5}$, then (in an irreducible contact graph) there is no non-trivial cycle of three edges where one edge occurs twice. Also, for any non-trivial cycle $c$ of three edges $l(c) = (\pm 1,0)$ or $l(c)=(0,\pm1)$, and for any non-trivial cycle $s$ of four edges $l(s) \in \{(\pm 1,0),(0,\pm 1),(1,\pm 1),(-1, \pm 1)\}$.\end{lemma}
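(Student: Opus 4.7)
The plan is to reduce the claim to a short arithmetic exercise by combining the triangle inequality for edge vectors with the integrality of the displacement $l(c)\in\mathbb Z^2$ associated to any closed walk. Each edge has length exactly $d\le \sqrt5/5$, so a closed walk of $k$ edges satisfies $|l(c)|\le kd$, and therefore $|l(c)|^2\le k^2/5$, which confines $l(c)$ to a short list of lattice points.

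First I would execute the $3$-cycle bound: $|l(c)|^2\le 9/5<2$, so $l(c)\in\{(0,0),(\pm 1,0),(0,\pm 1)\}$. Reading ``non-trivial'' as homologically non-trivial (i.e. $l(c)\ne 0$, which is the version used later to pick the cycles $c_1,c_2$ for System~(\ref{syst})), this immediately gives $l(c)\in\{(\pm 1,0),(0,\pm 1)\}$. The same computation for $k=4$ yields $|l(s)|^2\le 16/5<4$, whose only non-zero integer solutions are $(\pm 1,0),(0,\pm 1),(\pm 1,\pm 1)$, exactly the eight values listed in the statement.

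For the ``repeated edge'' clause I would first rule out self-loops: a self-loop lifts to a geodesic from a vertex to a non-trivial lattice translate of itself and so has length at least $1>d$, which is incompatible with $d\le\sqrt5/5$. In a loopless graph a $3$-edge closed walk $v_0\to v_1\to v_2\to v_0$ must have $v_0,v_1,v_2$ pairwise distinct, so no edge is traversed twice. Equivalently, if an edge did repeat, the two copies would have to carry opposite signs (same-sign consecutive traversal would force a self-loop), and the cancellation would leave $l(c)=\pm v$ with $|v|=d<1$, hence $l(c)=0$, contradicting non-triviality.

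I do not anticipate a serious obstacle: everything rests on the numerical inequalities $9d^2<2$ and $16d^2<4$, both of which hold with some room at $d=\sqrt5/5$, together with the fact that the shortest non-contractible loop on $\mathbb T^2$ has length $1$. The only slightly delicate point is pinning down the word ``non-trivial''; I take it to mean homologically non-trivial, since this is what makes the listed sets exhaustive and makes the lemma directly usable in the subsequent enumeration of admissible cycle pairs.
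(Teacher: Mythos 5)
Your proof is correct and follows essentially the same route as the paper: lift the closed walk to $\mathbb{R}^2$, bound the displacement by $kd$ ($3d<\sqrt{2}$ and $4d<2$), and enumerate the lattice points inside that radius. You are in fact slightly more complete than the paper's own proof, which leaves the repeated-edge clause implicit, whereas you spell out that a repeated edge forces either a self-loop (length $\geq 1$) or a cancellation giving $l(c)=0$.
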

\begin{proof} It easy follows from such bound on $d$ that these cycles lifted to $\mathbb R^2$ just couldn't reach any other point in $\mathbb Z^2$: the distance in $\mathbb R^2$ between the endpoints of any chain of three edges is not more than $3d$, which is strictly less than $\sqrt{2}$ --- the length of a diagonal of the square, so, if this chain connects the two different lifts of the same vertex, then they could only be different by the unit horizontal or vertical vector. For a cycle of length 4, the argument is the same: the distance is not more than $4d<2$, so the shift is only possible by the unit horizontal or vertical vector or by the diagonal of the square with the length $\sqrt{2}$.\end{proof}

\begin{lemma}\label{lc2} If $d\leq\frac {\sqrt{5}}{5}$ and $c_1$ and $c_2$ are homologically independent cycles (in an irreducible contact graph) having lengths 3 and 3, then there exists an isometry of the torus (possibly orientation-reversing) that makes $l(c_1) = (1, 0)$ and $l(c_2) = (0, 1)$.\end{lemma}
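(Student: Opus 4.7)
The plan is to combine Lemma \ref{lc1} with the elementary symmetry group of the square flat torus. By Lemma \ref{lc1}, each of $l(c_1)$ and $l(c_2)$ must lie in the set $S := \{(1,0),(-1,0),(0,1),(0,-1)\}$. Homological independence forces $l(c_1)$ and $l(c_2)$ to be linearly independent over $\mathbb{Z}$, so they cannot be parallel; in particular one of them is horizontal and the other is vertical.

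Next I would invoke the isometry group $G$ of the square torus $\mathbb{T}^2 = \mathbb{R}^2/\mathbb{Z}^2$. Ignoring translations (which do not affect the vectors $l(c)$), the relevant group is the dihedral group $D_4$ generated by the $90^\circ$-rotation $(x,y)\mapsto(-y,x)$ and the reflection $(x,y)\mapsto(x,-y)$. This group acts on the lattice $\mathbb{Z}^2$ and in particular on $S$, permuting its four elements in every possible way that respects the perpendicularity relation. Since any such isometry preserves distances on $\mathbb{T}^2$, it preserves the contact graph, and on each cycle $c$ the vector $l(c)$ is transformed by the same linear map applied to its coordinates.

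The key step is then a finite check: the eight ordered pairs of non-parallel vectors in $S$ form a single orbit under $D_4$. Indeed, $|D_4|=8$ and the stabilizer of any ordered pair $(u,v)$ with $u\perp v$ is trivial (an isometry fixing two independent vectors is the identity), so the action is simply transitive. Therefore there is a (unique) element $g\in D_4$ sending the ordered pair $(l(c_1), l(c_2))$ to $((1,0),(0,1))$, and after applying $g$ we are in the claimed normalized position.

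The main obstacle, such as it is, is simply making sure that ``isometry (possibly orientation-reversing)'' is read as an element of the full $D_4$ rather than only its orientation-preserving subgroup $C_4$; this is exactly why the statement allows orientation reversal, since under $C_4$ alone the two orbits $((1,0),(0,1))$ and $((1,0),(0,-1))$ remain distinct and one would need a reflection to merge them. No analytic or combinatorial difficulty beyond Lemma \ref{lc1} is involved.
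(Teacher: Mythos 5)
Your proposal is correct and follows essentially the same route as the paper: restrict $l(c_1),l(c_2)$ to $\{(\pm1,0),(0,\pm1)\}$ via Lemma~\ref{lc1}, note that homological independence rules out parallel vectors, and then normalize using the point symmetries of the square torus (the paper uses the reflections in $y=0$, $y=x$, $x=0$, which generate the same dihedral group $D_4$ you invoke). Your explicit observation that the $D_4$-action on ordered perpendicular pairs is simply transitive is just a more detailed spelling-out of the paper's one-line appeal to these reflections.
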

\begin{proof} These sums are originally $\pm (1, 0)$ and $\pm (0, 1)$ or vice versa (by the previous lemma), since the reflections against the lines $y=0$, $y=x$ and $x=0$ are isometries of the torus, we easily achieve what we want.\end{proof}

The next two lemmas have the same proof:

\begin{lemma}\label{lc3} If $d\leq\frac {\sqrt{5}}{5}$ and $c_1$ and $c_2$ are homologically independent cycles (in an irreducible contact graph) having lengths 3 and 4, then there exists an isometry of the torus (possibly orientation-reversing) that makes $l(c_1) = (1, 0)$ and $l(c_2) = (0, 1)$, or $l(c_1) = (1, 0)$ and $l(c_2) = (1, 1)$, or  $l(c_1) = (1, 0)$ and $l(c_2) = (-1, 1)$.\end{lemma}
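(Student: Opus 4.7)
The plan is to combine Lemma \ref{lc1} with the homological independence condition and then exhaust the short list of remaining cases using the symmetry group of the square torus.

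First I would invoke Lemma \ref{lc1} to restrict $l(c_1)$ to $\{\pm(1,0),\pm(0,1)\}$ and $l(c_2)$ to $\{(\pm1,0),(0,\pm1),(1,\pm1),(-1,\pm1)\}$. Next I would use the key fact that $c_1,c_2$ being homologically independent on $\mathbb{T}^2=\mathbb{R}^2/\mathbb{Z}^2$ means $l(c_1),l(c_2)$ form a $\mathbb{Z}$-basis of $\mathbb{Z}^2$ (equivalently $\det[l(c_1)\;l(c_2)]=\pm1$); this is exactly the "cut along which unwraps the torus to a flat polygon" condition mentioned in the preceding remark. This rules out $l(c_2)=\pm l(c_1)$ right away.

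Then I would normalize using the isometry group of the square flat torus, which acts on $\mathbb{Z}^2$ as the dihedral group $D_4$ generated by the reflections $(x,y)\mapsto(y,x)$, $(x,y)\mapsto(-x,y)$, $(x,y)\mapsto(x,-y)$. Applying a suitable combination of a coordinate swap and sign change, we may assume $l(c_1)=(1,0)$. Among the allowed values for $l(c_2)$, the basis condition forces the second coordinate of $l(c_2)$ to be $\pm1$, leaving $l(c_2)\in\{(0,\pm1),(1,\pm1),(-1,\pm1)\}$. The reflection $(x,y)\mapsto(x,-y)$ fixes $l(c_1)=(1,0)$ and flips the sign of the second coordinate of $l(c_2)$, so we may further assume this second coordinate is $+1$. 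This leaves exactly the three cases $l(c_2)\in\{(0,1),(1,1),(-1,1)\}$ stated in the lemma.

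The argument is essentially a finite case-check, so no real obstacle is expected; the only subtle point is justifying that homologically independent cycles correspond to a $\mathbb{Z}$-basis (not just linearly independent over $\mathbb{Q}$) of $H_1(\mathbb{T}^2)=\mathbb{Z}^2$, which follows from the geometric characterization that cutting along the two cycles yields a fundamental polygon. The same enumeration, mutatis mutandis, will give Lemma \ref{lc4} for the remaining cycle-length combinations.
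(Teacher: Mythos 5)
Your proof is correct and follows essentially the same route as the paper, which simply reuses the proof of Lemma \ref{lc2}: restrict the possible values of $l(c_1)$ and $l(c_2)$ via Lemma \ref{lc1}, then normalize with the reflection isometries of the square torus. The only difference is that you make explicit (via the determinant/$\mathbb{Z}$-basis condition) the exclusion of $l(c_2)$ parallel to $l(c_1)$, which the paper leaves implicit in the phrase ``$\pm(1,0)$ and $\pm(0,1)$ or vice versa''; this is a harmless and slightly more careful elaboration.
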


\begin{lemma}\label{lc4} If $d\leq\frac {\sqrt{5}}{5}$ and $c_1$ and $c_2$ are homologically independent cycles (in an irreducible contact graph) having lengths 4 and 4, then there exists an isometry of the torus (possibly orientation-reversing) that makes $l(c_1) = (1, 0)$ and $l(c_2) = (0, 1)$, or $l(c_1) = (1, 0)$ and $l(c_2) = (1, 1)$, or $l(c_1) = (1, 0)$ and $l(c_2) = (-1, 1)$, or $l(c_1) = (1, -1)$ and $l(c_2) = (1, 1)$.\end{lemma}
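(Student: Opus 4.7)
The plan is to prove Lemma \ref{lc4} by a finite orbit enumeration, paralleling the argument used for Lemma \ref{lc2}. The two inputs are Lemma \ref{lc1}, which (since both $c_1$ and $c_2$ have length $4$) forces each $l(c_i)$ to lie in the eight-element set $S:=\{(\pm 1,0),(0,\pm 1),(\pm 1,\pm 1)\}$, and the observation that the isometries of $\mathbb{T}^2=\mathbb{R}^2/\mathbb{Z}^2$ (including the orientation-reversing ones permitted by the lemma) induce on the homology lattice $\mathbb{Z}^2$ the dihedral group $D_4$ of order $8$, generated by the quarter-turn $(x,y)\mapsto(-y,x)$ and any reflection in a coordinate axis. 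Under $D_4$ the set $S$ breaks into two orbits: the four axis-aligned vectors and the four diagonal vectors. Homological independence forces $l(c_1)$ and $l(c_2)$ to be $\mathbb{Q}$-linearly independent, which excludes the antipodal pairings inside each orbit.

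First I would handle the case in which at least one of $l(c_1)$ and $l(c_2)$ is axis-aligned. Applying a $D_4$-element, I may assume $l(c_1)=(1,0)$. The stabilizer of $(1,0)$ in $D_4$ is generated by the reflection $(x,y)\mapsto(x,-y)$. Throwing out $\pm(1,0)$ as $\mathbb{Q}$-dependent, the remaining six admissible values for $l(c_2)$ split under this stabilizer into three pairs $\{(0,1),(0,-1)\}$, $\{(1,1),(1,-1)\}$, $\{(-1,1),(-1,-1)\}$; the representatives $(0,1)$, $(1,1)$, $(-1,1)$ yield exactly the first three pairs in the lemma.

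Second, if both $l(c_1)$ and $l(c_2)$ are diagonal, apply a $D_4$-element to reduce to $l(c_1)=(1,1)$. Its stabilizer in $D_4$ is generated by the swap $(x,y)\mapsto(y,x)$, and the remaining $\mathbb{Q}$-independent diagonals $(1,-1)$ and $(-1,1)$ form one orbit under this swap, so I may take $l(c_2)=(1,-1)$. Finally, to put the conclusion into the form written in the lemma, apply the reflection $(x,y)\mapsto(x,-y)$, which simultaneously swaps $(1,1)\leftrightarrow(1,-1)$ and thereby turns the pair $\bigl((1,1),(1,-1)\bigr)$ into $\bigl((1,-1),(1,1)\bigr)$.

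I do not anticipate any real obstacle; the only content beyond bookkeeping is verifying that each normalization step is realized by a genuine isometry of the square torus, which is immediate since every element of $D_4$ acts linearly on $\mathbb{R}^2$ and preserves both the Euclidean metric and $\mathbb{Z}^2$. This also explains the paper's remark that Lemma \ref{lc3} has the same proof: there the length-$3$ cycle $c_1$ is restricted by Lemma \ref{lc1} to $l(c_1)\in\{(\pm 1,0),(0,\pm 1)\}$, so only the axis-aligned case occurs and exactly the three pairs listed in that lemma survive.
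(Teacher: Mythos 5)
Your proof is correct and is essentially the paper's own argument made explicit: the paper merely states that Lemma~\ref{lc4} ``has the same proof'' as Lemma~\ref{lc2}, namely constrain $l(c_1)$ and $l(c_2)$ via Lemma~\ref{lc1} and then normalize using the reflections in $x=0$, $y=0$ and $y=x$, which is precisely your $D_4$-orbit enumeration carried out in detail. The one point worth stating explicitly is that when one class is axis-aligned and the other diagonal you must also allow swapping the labels of $c_1$ and $c_2$ (harmless here, since both cycles have length $4$), because no isometry alone can carry a pair of the form (diagonal, axis-aligned) to any of the four listed forms.
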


The next technical lemma guarantees some nice properties to the subgraph.

\begin{lemma}\label{lm} If $N=6$ or $N=7$, then for each irreducible (locally) optimal contact graph without isolated vertices there exists its subgraph with $2N-1$ edges which is cellular and all its vertices are of degree at least 3.\end{lemma}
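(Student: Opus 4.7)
The plan is to build $H$ by repeatedly deleting edges of $G$ while maintaining two invariants -- cellularity and minimum vertex degree $\geq 3$ -- until the edge count drops to exactly $2N-1$. Call an edge $e$ of the current subgraph \emph{safely removable} if both its endpoints have degree $\geq 4$ in the current subgraph and if its two sides lie in distinct faces. Removing such an edge preserves both invariants: no vertex drops to degree $2$, and two adjacent disk faces glue along $e$ into a single disk face. The starting subgraph $H^{(0)} := G$ is cellular (as an irreducible contact graph without isolated vertices), has minimum degree $\geq 3$ (by the corollary following the angle propositions), and has $|E| \geq 2N-1$ by Proposition \ref{edgebnd}, so we may iterate deletions as long as $|E| > 2N-1$.

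The key claim is that a safely removable edge exists whenever $|E| > 2N-1$. I would argue by contradiction: if no such edge exists, then every edge is either \emph{degree-blocked} (incident to some degree-3 vertex) or \emph{cellularity-blocked} (same face on both sides). The number of degree-blocked edges is at most $3n_3$, where $n_3$ is the number of degree-3 vertices. For the cellularity-blocked edges: each corresponds to a loop in the dual graph, and a spanning tree of the dual contains $F-1$ loop-free edges (each separating two distinct faces), so the number of dual loops is at most $|E| - (F-1) = N+1$ by Euler's formula for the torus. Hence $|E| \leq 3n_3 + N + 1$. Combining this with the partition $n_3 + n_4 + n_5 + n_6 = N$, the degree-sum identity $2|E| = 3n_3 + 4n_4 + 5n_5 + 6n_6$, and the degree bounds $3 \leq \deg \leq 6$, only a short finite list of candidate degree sequences for $|E| \geq 2N$ survives the crude count.

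\textbf{The main obstacle} is ruling out the remaining candidate degree sequences, particularly those with $n_3 \geq 2$, for which the inequality $|E| \leq 3n_3 + N + 1$ does not by itself force $|E| \leq 2N - 1$. To eliminate these, I would sharpen the bound $|B| \leq N+1$ on cellularity-blocked edges by exploiting the specific torus embedding: a dual loop bounding a disk in $\mathbb{T}^2$ encloses a set of $G$-vertices whose incident edges all cross the loop, an arrangement incompatible with the absence of multi-edges for $N \geq 5$ together with the minimum-degree-$\geq 3$ hypothesis, while the presence of too many homologically nontrivial dual loops forces repeated-class configurations that again clash with the no-multi-edge and minimum-degree constraints. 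For $N \in \{6, 7\}$ the residual subcases form a small finite list that can be verified directly. The restriction to $N \in \{6, 7\}$ in the lemma statement reflects precisely the fact that this case analysis, while finite in principle, becomes combinatorially unwieldy for larger $N$.
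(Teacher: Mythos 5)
Your overall strategy --- iteratively deleting edges that are neither incident to a degree-$3$ vertex nor bordered by the same face on both sides, with the degree-blocked edges counted by $3n_3$ --- is exactly the skeleton of the paper's proof, but there is a genuine gap at the critical step: the bound on cellularity-blocked edges. Your spanning-tree count in the dual gives at most $E-(F-1)=N+1$ such edges, which for $N=6$ yields only $E\le 3n_3+7\le 16$, nowhere near a contradiction with $E\ge 12$. You acknowledge this and propose to ``sharpen the bound by exploiting the specific torus embedding,'' but that sharpening \emph{is} the lemma, and it is not carried out. The paper proves the number of cellularity-critical edges is at most \emph{two}, by a structural decomposition: such an edge is either a bridge joining ``vertical'' components (subgraphs wrapping around the torus) and ``ships'' (planar components), or a cut inside the unique vertical component. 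Since $d<\tfrac12$, a vertical component needs at least $3$ vertices to wrap around, hence at least $4$ once multiple edges and degrees $\le 2$ are forbidden; a ship likewise needs at least $4$ vertices. With only $6$ or $7$ vertices in total this forces exactly one vertical component and no ships, leaving at most one critical edge in each of the two directions. Your remarks about dual loops bounding disks gesture at the ships, but never produce the quantitative bound of $2$, and without it your ``short finite list of candidate degree sequences'' is neither short nor actually checked.

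Second, even granting the sharp bound of $2$, pure counting still fails in the tight case $N=7$, $n_6=1$, $n_3=4$, where $3n_3+2=14=2N$, and it does not by itself dispose of $n_6\ge 2$. The paper resolves both with a geometric fact your proposal never invokes: a vertex of degree $6$ in a contact graph is the center of a regular hexagon split into six equilateral triangles, which forces a rhombus whose diagonal is always safely removable (it is not cellularity-critical and its endpoints have high degree). You would need both the ``at most two'' bound and this rhombus argument to close the proof; as written, the residual case analysis you defer to is precisely where the content of the lemma lives.
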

\begin{proof} We prove that if there is a cellular graph with degrees of all vertices at least 3 and at least $2N$ edges, then we can remove an edge so that the property holds.

Firstly, we prove the estimate on the number of vertices of degree 3.  Since $$3n_3 + 4n_4 + 5n_5 + 6n_6 = 2E,$$ where $n_k$ is the number of vertices of degree $k$, and $$E \geq 2N = 2n_3 + 2n_4 + 2n_5 + 2n_6,$$ we have $n_3 \leq n_5 + 2n_6$.

Now we prove that there are at most two edges, the removal of which violates cellularity. Suppose there is such an edge $e$. Then there we have one of the following pictures (topologically):

\begin{figure}[h]
	\centering
	\begin{subfigure}[t]{0.28\textwidth}
		\centering
		\resizebox{\textwidth}{130pt}
		{
			\begin{tikzpicture}[scale=0.4]
			
			\draw  (-5,5) rectangle (5,-5);
			\node (v1) at (-3,5) {};
			\node (v2) at (-2.1,0) {};
			\node at (-3,-5) {};
			\node (v7) at (-4,0) {};
			\draw  (-3,0) ellipse (1 and 5);
			\node (v3) at (-0.9,0) {};
			\node (v4) at (0.9,0) {};
			\node at (0,5) {};
			\node at (0,-5) {};
			\draw  (v2) edge node[auto] {e} (v3);
			\node (v5) at (2.1,0) {};
			\node (v8) at (3,0) {};
			\draw  (v4) edge (v5);
			\draw  (0,0) ellipse (1 and 5);
			\draw  (3,-1) rectangle (2,1);
			\node (v9) at (5,0) {};
			\node (v6) at (-5,0) {};
			\draw  (v6) edge (v7);
			\draw  (v8) edge (v9);
			\draw[thick, dashed,] (0,0) -- (0.8,3);
			\draw[thick, dashed,] (0,0) -- (0.8,-3);
			\draw[thick, dashed,] (0.4,1.5) -- (-0.8,3);
			\draw[thick, dashed,] (0,0) -- (-0.95,-2);
			\draw[thick, dashed,] (-0.48,-1) -- (-0.95,2);
			\draw[thick, dashed,] (-0.95,-2) -- (0.8,-3);
			\draw[thick, dashed,] (-3,0) -- (-2.2,3);
			\draw[thick, dashed,] (-3,0) -- (-2.2,-3);
			\draw[thick, dashed,] (-2.6,1.5) -- (-3.8,3);
			\draw[thick, dashed,] (-3,0) -- (-3.95,-2);
			\draw[thick, dashed,] (-3.48,-1) -- (-3.95,2);
			\draw[thick, dashed,] (-3.95,-2) -- (-2.2,-3);
			\draw[thick, dashed,] (2,-0.5) -- (2.4,0.3) -- (2.7,-0.4) -- (3,0);
			\draw[thick, dashed,] (2,0.5) -- (2.4,0.3);
			\draw[thick, dashed,] (2,-0.5) -- (2.7,-0.4);
			
			\end{tikzpicture}
		}
		\caption{2 vertical\\1 ship}
		\label{fig:lem1}
	\end{subfigure}
	\begin{subfigure}[t]{0.28\textwidth}
		\centering
		\resizebox{\textwidth}{130pt}
		{
			\begin{tikzpicture}[scale=0.4]
				\draw  (-5,5) rectangle (5,-5);
				\node (v1) at (-5.9,5) {};
				\node (v2) at (-5.1,0) {};
				\node at (-5.9,-5) {};
				
				\node (v3) at (-3.5,0) {};
				\node (v4) at (0.5,0) {};
				\node at (-2.9,5) {};
				\node at (-2.9,-5) {};
				\draw  (v2) edge node[auto] {e} (v3);
				\node (v5) at (2.1,0) {};
				\node (v8) at (3,0) {};
				\draw  (v4) edge (v5);
				\draw  (-1.5,0) ellipse (2 and 5);
				\draw  (3,-1) rectangle (2,1);
				\node (v9) at (5,0) {};
				
				\draw  (v8) edge (v9);
				\draw[thick, dashed,] (-2,0) -- (-3.5,0);
				\draw[thick, dashed,] (-2,0) -- (0,-3);
				\draw[thick, dashed,] (0.4,1.5) -- (-3,3);
				\draw[thick, dashed,] (-2,0) -- (0.4,1.5);
				\draw[thick, dashed,] (-2,0) -- (-2,-3);
				\draw[thick, dashed,] (-2,0) -- (-3,3);
				\draw[thick, dashed,] (-3,-3) -- (0,-3);
				\draw[thick, dashed,] (2,-0.5) -- (2.4,0.3) -- (2.7,-0.4) -- (3,0);
				\draw[thick, dashed,] (2,0.5) -- (2.4,0.3);
				\draw[thick, dashed,] (2,-0.5) -- (2.7,-0.4);
			\end{tikzpicture}
		}
		\caption{1 vertical\\1 ship}
		\label{fig:lem2}
	\end{subfigure}
	\begin{subfigure}[t]{0.28\textwidth}
		\centering
		\resizebox{\textwidth}{130pt}
		{
			\begin{tikzpicture}[scale=0.4]

				\draw  (-5,5) rectangle (5,-5);
				
				\draw  (-1.5,0) ellipse (3.5 and 5);

				\draw[thick, dashed,] (-2,1) -- (-5,0);
				\draw[thick, dashed,] (-2,1) -- (1,-3);
				\draw[thick, dashed,] (0.4,1.5) -- (-3,3);
				\draw[thick, dashed,] (-2,1) -- (0.4,1.5);
				\draw[thick, dashed,] (-2,1) -- (-4,-2);
				\draw[thick, dashed,] (-2,1) -- (-3,3);
				\draw[thick, dashed,] (-4,-2) -- (1,-3);
				\draw[thick, dashed,] (-2,1) -- (2,0);
				\draw(2,0) edge node[auto]{e} (5,0);
				
			\end{tikzpicture}
		}
		\caption{1 vertical}
		\label{fig:lem3}
	\end{subfigure}

\end{figure}

It means that there are several ``vertical'' components that wrap around the torus, several ``ships'' --- planar components, and they are connected by the edges like $e$ that cannot be removed without a loss of cellularity.

Note that each ``vertical'' component may have at least three vertices as it should wrap around the torus and $d<\frac 12$, and therefore at least four, as there are no multiple edges and all degrees are at least 3. So in the case of at least two different ``vertical'' components there should be at least 8 vertices, but we have only 7. If there is a ``ship'', then it has to contain at least 4 (even more, if we look more accurately) vertices since otherwise the third vertex has degree at most two. Since the embedding is cellular, there exists a ``vertical'' component, and thus we again have at least 8 vertices, which contradicts the statement. So there are no ``ships'' and in this direction there is at most one such edge (in a sense that if we cut the torus across this edge and get a cylinder then no other edge can be removed to make the embedding in the cylinder non-cellular). Similarly, inside the only ``vertical'' component, there is also at most one edge in the other direction, after removing  which the embedding becomes non-cellular.

If $N=6$, then we have $n_6 = 0$ and $n_3 \leq 3$ as $n_3\leq n_5$ and $n_3+n_5\leq6$. So we have at most $3n_3\leq 9$ edges prohibited to remove by the condition on degrees and at most 2 by the one on cellularity. So, among at least $2N=12$ edges, only $9+2=11$ are disallowed to be removed.

If $N=7$ and $n_6 >= 2$ then, by the remark on the structure of vertices of the degree 6 the edge between these 2 vertices is a diagonal of a rhombus and can safely be removed. If $n_6=1$ then we have $n_3+n_5\leq 6$ (the sum is less than 7 and must be even) and $n_3\leq n_5+2n_6=n_5+2$. Thus $n_3\leq 4$. Our graph has at least $2N=14$ edges, so in the case $n_3\leq 3$ we have at most $3n_3+2\leq11$ prohibited edges and we are done. The last case is $n_3 = 4$.

If $n_6=1$ and $n_3=4$, then $n_5=2$ and these 2 vertices of degree 5 share a common adjacent vertex of degree 3, and all these 3 vertices are adjacent to the one of the degree 6. That gives us that there is a rhombus $abcd$, where $\deg a = 5$, $\deg b = 6$, $\deg c = 5$ and $\deg d = 3$, from which we can safely remove the diagonal edge $bd$. That finishes the proof.
\end{proof}

\section{Proof of Theorem \ref{thm6}}
\subsection{First step: finding all possible subgraphs}

To prove the desired Theorem \ref{thm6}, we ask {\ttfamily surftri} to enumerate all cellular toroidal graphs on 6 vertices having $6\cdot2-1=11$ edges and all vertices of degree no less than 3 (these are the subgraphs described in lemma \ref{lm}). There are 252 such graphs. Then for each graph we find two shortest homologically independent cycles and write down all the possible systems (\ref{syst}) from page \pageref{syst} (the possibilities are described in Lemmas \ref{lc1} -- \ref{lc4}) and try to isolate its solution (or solutions) on the set
$$ \begin{array}{ll}
	 |{v_i}_x| \leq d, & i = 1..E \\
	 |{v_i}_y| \leq d, & i = 1..E \\
	 d \in [d_0, d_1] & \\
	 \dist(p_i, p_j) \geq d, &
	 \end{array}
$$
 where $d_0$ and $d_1$ are the known bounds on $d$ (the lower one comes from the ``guess'' for the optimal configuration, the upper one is theoretical). The isolation is performed using the branch and bounds method, i.e., on each iteration we subdivide the allowed intervals for each variable and check the possibility of the feasibility on the set using minimal and maximal estimates. We've used two different methods for choosing the independent variables to be localized by branching. 

In the first approach, to minimize the number of independent variables we first solve the linear part of the system concerning only $x$ coordinates, where the space of the solutions has the dimension $E-F=N$, and we put bounds only on the $N$ coordinates ${v_j}_x, j\in J$ that determine the solution (also, we put bounds on $d$), while the $y$-coordinates are determined from the equations ${v_i}_y = \pm \sqrt{d^2-{v_i}_x^2}$. The feasibility is checked in the following steps:
\begin{enumerate}
	\item Calculate the bounds on the ${v_i}_x$ for $i \not\in J$ from the linear system on $x$-s;
	\item Calculate the bounds on the $|{v_i}_y|$ corresponding to the already bounded $x$-s from the $|v_i| = d$ equations;
	\item Enumerate all possible $2^{E}$ signs for ${v_i}_y$;
	\item Minimax check of the feasibility of the linear system on ${v_i}_y$.
\end{enumerate}

In the second approach, we solve the linear part on both $x$-s and $y$-s (i.e., on the vectors), and then use the expressions 
$$ \begin{array}{l}
{v_i}_x=d\cdot\cos(\alpha_i),\\
{v_i}_y=d\cdot\sin(\alpha_i)\\
\end{array}
$$
for the base vectors. And we put bounds on these $\alpha_i$. Here the process of checking the feasibility looks like this:
\begin{enumerate}
	\item Calculate the bounds on the ${v_i}_x$ and ${v_i}_y$ for $i \not\in J$ from the linear system;
	\item Check lengths of these vectors.
\end{enumerate}

{\noindent \bf Remark}. Actually, the second approach appears to work faster in general, since there is no need to enumerate all the possible combinations of signs for $y$-s, but, firstly, the problem for $N=6$ has been solved by using only the first approach, and, secondly, for a few graphs on $7$ vertices it also appeared to be faster. And the preliminary elimination for $N=7$ has also been done using the first approach.

If the solution is possible and the desired precision is not achieved, we continue the subdivision.

In such a way, we find (subgraphs of) locally optimal configurations without isolated vertices with an arbitrary precision. Then, we do the same procedure for the graphs having less vertices and try to find some place for an isolated vertex there.

This gives us some set of embeddings, but we are still unable to distinguish the subgraphs of the optimal configuration. So, at this stage the computation shows that the following theorem holds:

\begin{thm} Each irreducible optimal configuration on the $N=6$ vertices has a subgraph isometric to one of the graphs in the Figures \ref{fig:left6_1} and \ref{fig:left6_2}, where each red square (that hardly could be seen on the image) determines the possible region for the point, and green circles are of radius about $d$. The precise numerical bounds could be found in the Appendix \ref{app_graphs6}.

\begin{figure}[htb]%
	 \centering
        \begin{subfigure}[b]{0.4\textwidth}
                \centering
                \fbox{\includegraphics[clip, trim = 1 1 3 3,scale=0.3]{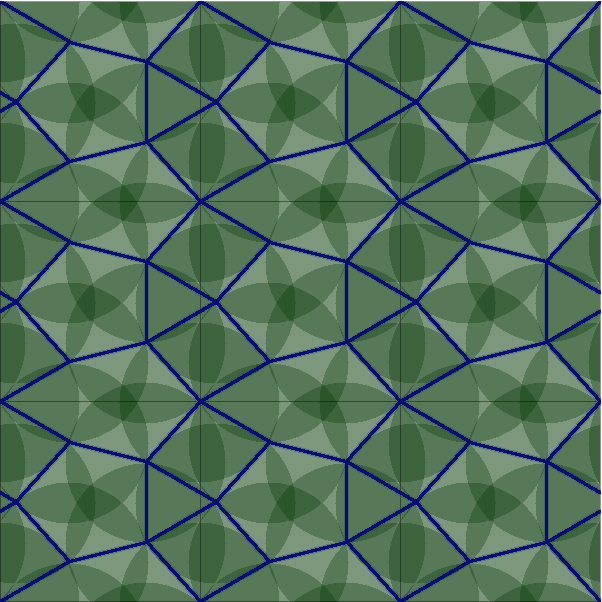}}
                \caption{$G_1$}
        \end{subfigure}%
        \begin{subfigure}[b]{0.4\textwidth}
                \centering
                \fbox{\includegraphics[clip, trim = 1 1 3 3,scale=0.3]{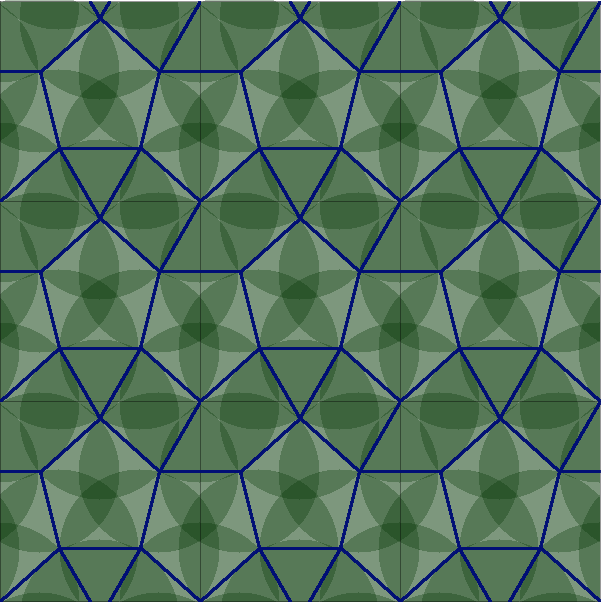}}
                \caption{$G_2$}
        \end{subfigure}%
    \caption{Graphs pretending to be the subgraphs of the optimal}
    \label{fig:left6_1}
\end{figure}
        
\begin{figure}[htb]%
	\centering
        \begin{subfigure}[b]{0.4\textwidth}
                \centering
                \fbox{\includegraphics[clip, trim = 1 1 3 3,scale=0.3]{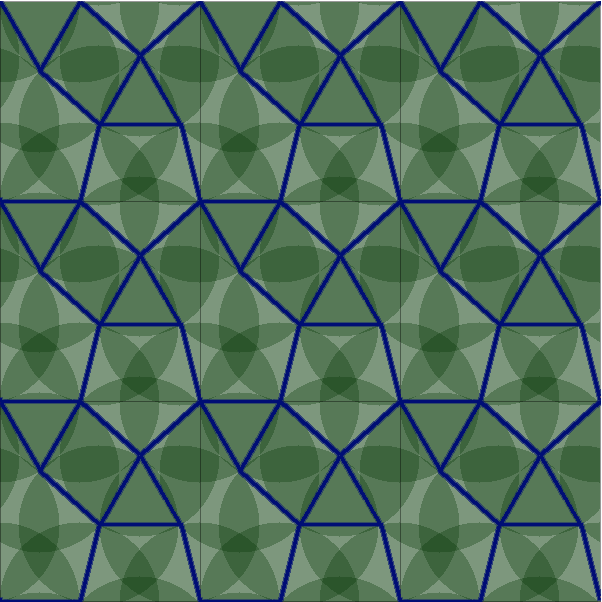}}
                \caption{$G_3$}
        \end{subfigure}%
        \begin{subfigure}[b]{0.4\textwidth}
                \centering
                \fbox{\includegraphics[clip, trim = 1 1 3 3,scale=0.3]{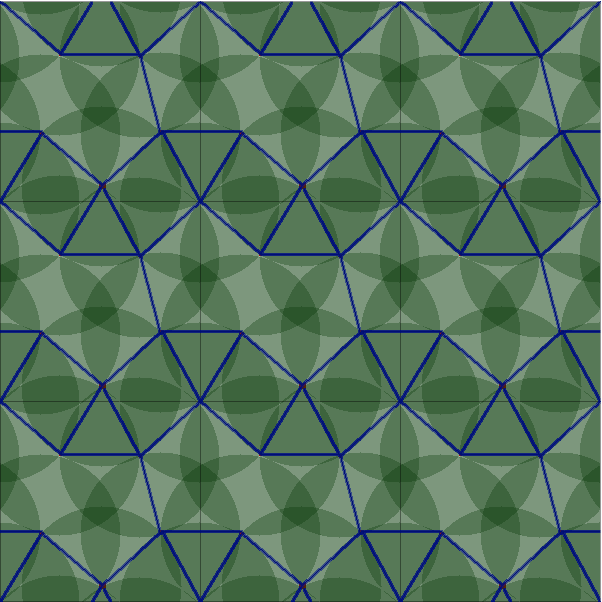}}
                \caption{$G_4$}
        \end{subfigure}%
    \caption{Graphs pretending to be the subgraphs of the optimal}
    \label{fig:left6_2}
\end{figure}
\end{thm}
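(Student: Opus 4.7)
The plan is to follow the framework developed in the preceding sections, reducing the classification to a finite computer-checked enumeration. By Lemma~\ref{lm}, every irreducible optimal configuration on $N=6$ vertices either has an isolated vertex, in which case removal gives a locally optimal configuration on $5$ vertices sitting inside a face whose universal cover has at least $7$ sides, or admits a cellular subgraph $H$ with exactly $2N-1=11$ edges and minimum degree $\geq 3$. So the first step is to invoke \texttt{surftri} to generate the complete list of cellular toroidal graphs with 6 vertices, 11 edges, and minimum degree 3 (the paper records 252 of them), together with their rotation systems.

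For each candidate graph $G$ I would select two shortest homologically independent cycles $c_1, c_2$ (their lengths are necessarily 3 or 4 for small $N$, by the edge count and degree bounds), and then form System~(\ref{syst}). Lemmas~\ref{lc1}--\ref{lc4} enumerate the finitely many admissible choices of $l(c_1)$ and $l(c_2)$ up to a torus isometry, so each $G$ produces a finite family of polynomial systems. Next, for each such system, I would run an interval branch-and-bound feasibility check on the box
$$|{v_i}_x|\leq d,\quad |{v_i}_y|\leq d,\quad d\in[d_0,d_1],\quad \dist(p_i,p_j)\geq d,$$
where $d_0$ comes from the known candidate configuration (giving $d_0\approx 0.40040554$) and $d_1$ is the density bound $2/\sqrt{N\sqrt{12}}$. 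Using either the $x$-coordinate parametrization (with the $y$-coordinates recovered via $v_{iy}=\pm\sqrt{d^2-v_{ix}^2}$ and sign enumeration) or the angular parametrization $(v_{ix},v_{iy})=d(\cos\alpha_i,\sin\alpha_i)$, I would iteratively subdivide the independent-variable box and discard subboxes in which the linear equations, length equations, or non-overlap inequalities become infeasible. Systems whose whole box is eliminated rule out the graph; surviving boxes are refined to arbitrary precision to produce candidate embeddings.

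The same pipeline is then rerun on graphs with fewer non-isolated vertices, now additionally testing whether a $6$-th isolated vertex fits inside a face of universal cover with $\geq 7$ sides without violating $\dist(\cdot,\cdot)\geq d$. After both passes, the claim is that precisely four non-isometric subgraphs $G_1,G_2,G_3,G_4$ survive, and the tight coordinate boxes produced by branch-and-bound are exactly the red squares shown in Figures~\ref{fig:left6_1}--\ref{fig:left6_2}, with the numerical data deferred to Appendix~\ref{app_graphs6}. The main obstacle I expect is the combinatorial cost: $252$ graphs, each with several choices of $(l(c_1),l(c_2))$, and a branch-and-bound that can stall near degenerate configurations where two vertices almost coincide or an angle approaches $\pi$; mitigating this requires a tight $d_0$ (so that most systems fail quickly by the $d\geq d_0$ constraint) together with a careful choice of the splitting variables in the parametrization. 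Because every step of the elimination uses rigorous interval arithmetic, the resulting list of surviving graphs is a genuine mathematical statement, which is what the theorem asserts.
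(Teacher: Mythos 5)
Your proposal reproduces the paper's argument essentially verbatim: the reduction via Lemma~\ref{lm} to cellular $11$-edge subgraphs of minimum degree~$3$ (plus the separate isolated-vertex pass on fewer vertices), the \texttt{surftri} enumeration of the $252$ candidates, the finitely many choices of $l(c_1),l(c_2)$ from Lemmas~\ref{lc1}--\ref{lc4}, and the interval branch-and-bound feasibility check in either the $x$-coordinate or angular parametrization, with the surviving boxes giving the red squares of $G_1,\dots,G_4$. This is exactly how the paper establishes the theorem, so no further comparison is needed.
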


\subsection{Second step: distinguishing subgraphs}
\label{disting}

To show that the only reason these graphs are found by the code is because they are the subgraphs of the optimal one showed on Figure \ref{fig:opt6} and do not lead to another optimal configuration, we distinguish two possibilities: if there surely is an obtuse angle in the graph, then the corresponding vertex should either be isolated (it is out of interest for us as in this case we will get this graph from the enumeration of graphs with less vertices), or there should be another edge from it. Looking at Figure \ref{fig:left6_2}, we easily see that for $G_3$ and $G_4$ there are obtuse angles, allowing the only possibility of adding an edge. If the edge is added, we obtain the graph shown in Figure \ref{fig:opt6}, where we can delete another edge from, and reduce the problem to, for example, the graph $G_1$. Now we are left with the two graphs $G_1$ and $G_2$. To deal with them (as the pentagons are hard objects for manual maximization) we use the following techniques: we will prove that within the prescribed rectangles there is at most one solution of System (\ref{syst}), and since the one in Figure \ref{fig:opt6} satisfies these bounds, the solutions coincide. For convinience, we replace $d^2$ by just $2d$, changing the bound in the appropriate way.

To prove the propsition, first of all we rewrite System (\ref{syst}) in coordinates of the points of the configuration (we've multiplied by $\frac 12$ for convinience):

\begin{equation}
\label{syco}
\left\{
\begin{array}{l}
	x_0 = 0\\
	y_0 = 0\\
	\frac 12 (x_i - x_j + \delta_{ij})^2 + \frac 12 (y_i - y_j + \gamma_{ij})^2 - d = 0, \enskip 0\leq i < j < N, (ij)\text{ is an edge}
\end{array} 
\right.
\end{equation}
where $\delta_{ij}, \gamma_{ij} \in \{0, \pm1\}$ shows which distance (of the nine possible) between the points we have to consider. We assume that the precision is good enough to determine these uniquely.

Suppose we have some $$X_0 = (x_1^0, y_1^0, x_2^0, y_2^0, \ldots, x_{N-1}^0, y_{N-1}^0, d^0)$$ such that $F(X_0) = 0$, where $F$ is the function $\mathbb R^{2N-1} \to \mathbb R^{2N-1}$ on the left hand side of System (\ref{syco}) (we've substituted $x_0=y_0=0$). Since $F$ is quadratic and $F(X_0)=0$, we have $$F(X_0+\eps v) = \eps \dd F_{X_0}(v) + \eps^2 \frac 12 \dd^2F_{X_0}(v,v)$$ for any vector $$v = (dx_1, dy_1, dx_2, dy_2, \ldots, dx_{N-1}, dy_{N-1}, dd)\in R^{2N-1}.$$ We claim that for $\eps>0$ small enough\footnote{Recall that $\|u\|_\infty$ is the maximal of the absolute values of the coordinates of $u$.} $$\|F(X_0+\eps v)\|_\infty>0$$ for any unit (in $\|\cdot\|_\infty$ norm) vector $v$, and if we would have an explicit bound on $\eps$ then we could establish the uniqueness.

Firstly\footnote{$F_{(ij)}$ means the coordinate that corresponds to the equation for the edge $ij$.}, we compute $$(d^2F(v,v))_{(ij)} = (dx_i - dx_j)^2 + (dy_i - dy_j)^2$$ and thus $$\left\|\frac 12 d^2F(v,v)\right\|_\infty \leq 4.$$ Now, we have to estimate $\|\dd F(v)\|_\infty$ from below.

We have $$(\dd F_{X_0}(v))_{(ij)} = (x_i^0 - x_j^0 + \delta_{ij})(dx_i - dx_j) + (y_i^0 - y_j^0 + \gamma_{ij})(dy_i - dy_j) - dd,$$
where the coefficients are actually the coordinates of the edge vectors. We are interested in
\begin{equation}
\inf\limits_{\|v\|_\infty=1}\|\dd F(v)\|_\infty = \dfrac{1}{\|\dd F^{-1}_{X_0}\|_\infty}.
\label{infin}
\end{equation}

However, we do not know the coefficients exactly, so we need the following well-known lemma from the theory of linear operators:
\begin{lemma}
For any two matrices $A$ and $B$ with $A-B=\Delta$, the following inequality holds $$\|A^{-1}-B^{-1}\| \leq \dfrac{\|B^{-1}\|^2\|\Delta\|}{1-\|\Delta\|\|B^{-1}\|},$$ if the denominator is positive.
\end{lemma}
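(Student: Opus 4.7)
The plan is to exploit the resolvent-style identity
\[
A^{-1}-B^{-1} \;=\; A^{-1}(B-A)B^{-1} \;=\; -A^{-1}\Delta\,B^{-1},
\]
which is a one-line algebraic manipulation valid as soon as both $A$ and $B$ are invertible. Taking a submultiplicative operator norm gives immediately
\[
\|A^{-1}-B^{-1}\| \;\leq\; \|A^{-1}\|\,\|\Delta\|\,\|B^{-1}\|,
\]
so the only remaining task is to bound $\|A^{-1}\|$ in terms of $\|B^{-1}\|$ and $\|\Delta\|$.

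For this I would write $A = B + \Delta = B(I + B^{-1}\Delta)$. Since $\|B^{-1}\Delta\| \leq \|B^{-1}\|\,\|\Delta\| < 1$ by the assumption that the denominator in the lemma is positive, the operator $I+B^{-1}\Delta$ is invertible, and the Neumann (geometric) series yields the standard bound
\[
\|(I+B^{-1}\Delta)^{-1}\| \;\leq\; \sum_{k\geq 0}\|B^{-1}\Delta\|^{k} \;=\; \frac{1}{1-\|B^{-1}\Delta\|} \;\leq\; \frac{1}{1-\|\Delta\|\,\|B^{-1}\|}.
\]
Consequently $A$ is invertible, $A^{-1}=(I+B^{-1}\Delta)^{-1}B^{-1}$, and
\[
\|A^{-1}\| \;\leq\; \frac{\|B^{-1}\|}{1-\|\Delta\|\,\|B^{-1}\|}.
\]

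Substituting this estimate into the resolvent bound gives
\[
\|A^{-1}-B^{-1}\| \;\leq\; \frac{\|B^{-1}\|^{2}\,\|\Delta\|}{1-\|\Delta\|\,\|B^{-1}\|},
\]
which is exactly the stated inequality. There is no real obstacle here: the result is a standard perturbation bound for matrix inversion, and the only subtle point is that the hypothesis ``the denominator is positive'' is precisely what guarantees the convergence of the Neumann series and thereby the invertibility of $A$ used in the very first identity. In the intended application $B$ will be a numerically computed approximation to the Jacobian $\mathrm{d}F_{X_0}$ and $\Delta$ will be a rigorously certified error matrix, so this lemma is exactly what converts an approximate inverse into a guaranteed two-sided bound on $\|\mathrm{d}F^{-1}_{X_0}\|_\infty$ needed to make the quantity in (\ref{infin}) effective.
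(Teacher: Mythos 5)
Your proof is correct and complete; the paper itself offers no proof of this lemma, merely citing it as ``well-known,'' so there is nothing to diverge from. Your argument is the standard one: the identity $A^{-1}-B^{-1}=-A^{-1}\Delta B^{-1}$ combined with the Neumann-series bound $\|A^{-1}\|\leq\|B^{-1}\|/(1-\|\Delta\|\,\|B^{-1}\|)$, and it has the added merit of showing that the positivity of the denominator actually \emph{forces} $A$ to be invertible rather than assuming it.
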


Applying this for $A = \dd F_{X_0}$ and $B = \dd F_{\tilde X}$ where $\tilde X$ is some point configuration, whose edges lie within the $\delta$-neighborhood of that of $X_0$, we (noticing that since matrices $A$ and $B$ are different by $\delta$ only in 4 positions in each row) get $\|\Delta\|_\infty=\|A-B\|_\infty \leq 4\delta$ and hence
$$\|\dd F_{X_0}^{-1}\|_\infty \leq \|B^{-1}\|_\infty + \dfrac{4\delta\|B^{-1}\|_\infty^2}{1-4\delta\|B^{-1}\|_\infty}.$$

Substituting this into (\ref{infin}) we get the lower bound on $\dd F_{X_0}(v)$; denote it by $h$. So, we have $$\|F(X_0+\eps v)\|\geq \eps h - 4\eps^2.$$ which is positive provided $\eps < \frac {h}{4}$. So, given the bounding intervals for the edges we now can guarantee the uniqueness of the solution by calculating $h$ and checking the inequalities. We do this for the graphs pretending to be optimal and Theorem \ref{thm6} follows.

\section{Proof of theorems \ref{thm7} and \ref{thm8}}

First of all, we notice that the Theorem \ref{thm8} follows from the Theorem \ref{thm7} because we know that $d(8)\leq d(7)$ and there is a configuration with $d=d(7)$. Uniqueness follows from the remark that after removing any vertex we would achieve an optimal configuration on $7$ vertices, all of which are described by the Theorem \ref{thm7}.

Now let's turn to $N=7$. We've used exactly the same method for finding the subgraphs as in the previous section. And (after a couple of weeks of calculations and code optimization) we've been left with the subgraphs of the ones in he figures \ref{fig:opt7_1}--\ref{fig:opt7_3}. Surprisingly, we had not been aware of the graph in the figure \ref{fig:opt7_3}, it has been found by the code. There are a bit more of the found subgraphs than for $N=6$, so we do not provide all the images right here; nevertheless, they can be found at the URL below. There they are categorized by the way used to complete them to optimality and by the step of the elimination process.

Now we have the last problem to distinguish the subgraphs. Note that the graph on Figure \ref{fig:opt7_1} is achieved by enumerating graphs on 6 vertices, and it has $12$ edges, while we enumerate its subgraphs with $2\cdot6-1=11$ edges, so for it we need to prove the existence of the last edge. Similarly, we have to prove that the one last edge should be actually present for the subgraphs of \ref{fig:opt7_3} and the two last edges for $\ref{fig:opt7_2}$. Note that for each edge of $\ref{fig:opt7_2}$, the removal of which doesn't affect the convexity, there is another edge that can be removed simultaneously without the loss of convexity, so we have to deal only with the subgraphs with $13$ edges, whose faces are convex (as otherwise one edge should be added and after that another can be removed [or added up to the optimal graph, if the graph is still ``non-convex''], and we reduce the problem to the one with convex faces by removing an additional edge).

So, now we have to distinguish a ``convex'' subgraph from the optimal graph. We use the similar technique as for $N=6$ involving the differential, but we slightly adjust the method to have better precision. For every possible subgraph embedding $G$ we find the nearest (in the $\|\cdot\|_\infty$-norm on the vertices) graph $\Gamma$ isometric to the graphs on Figures \ref{fig:opt7_1}--\ref{fig:opt7_3} (of course, we remove the floating vertex from \ref{fig:opt7_1} and work on 6 points) and estimate the distance. Then we use the same estimate on the size of the uniqueness domain for that $\Gamma$ (we know its coordinates exactly, so we do not need to estimate the differential in the neighborhood). If the estimate distance is less than this bound, then we are done, having proved that vertices of $G$ coincide with the ones of $\Gamma$.

Also, using the very original problem statement: to maximize $d$ on the set in $\mathbb R^{2N-1}$ described by inequalities
$$\left\{\begin{array}{l}
	0\leq x_i \leq 1\\
	0\leq y_i \leq 1\\
	\dist(p_i, p_j) \geq d,\\
\end{array}\right.$$ we've implemented the Karush--Kuhn--Tucker condition checking for proving that there either is another edge or the graph is not optimal. It could save us a bit of time, but we had already achieved a sufficient precision for the graphs to use the differential estimate by the time these check was written. Actually, we used it only for the one graph on $6$ vertices. And for the another one graph we've had to manually correct the resulting estimates because the basis chosen by the program had not been optimal and the better estimates for the coordinates could be achieved manually.

After performing all these steps, all of the graphs have been proved to coincide with the optimal ones, except for the only one in the following Figure \ref{fig:stb7}. Here the differential of System (\ref{syst}) degrades and the Karush--Kuhn--Tucker conditions are satisfied, so we have to develop an individual strategy for it.

\begin{figure}[h]
        \centering
        \fbox{\includegraphics[clip, trim = 2 1 6 3, scale=0.6]{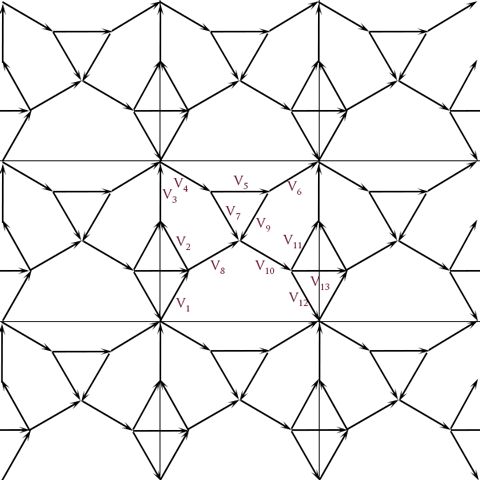}}
        \caption{The most powerful graph}
        \label{fig:stb7}
\end{figure}%

Let's write down the corresponding system of equations:
\begin{equation}
\label{syststb}
\left\{
\begin{array}{l}
{v_1}_x + {v_2}_x + {v_3}_x = 0\\
{v_1}_y + {v_2}_y + {v_3}_y - 1 = 0\\
{v_4}_x + {v_5}_x + {v_6}_x - 1 = 0\\
{v_4}_y + {v_5}_y + {v_6}_y = 0\\
{v_2}_x + {v_3}_x + {v_4}_x + {v_7}_x - {v_8}_x = 0\\
{v_2}_y + {v_3}_y + {v_4}_y + {v_7}_y - {v_8}_y = 0\\
{v_5}_x + {v_9}_x - {v_7}_x = 0\\
{v_5}_y + {v_9}_y - {v_7}_y = 0\\
{v_6}_x - {v_3}_x - {v_{11}}_x - {v_{10}}_x - {v_9}_x = 0\\
{v_6}_y - {v_3}_y - {v_{11}}_y - {v_{10}}_y - {v_9}_y = 0\\
{v_{13}}_x - {v_{11}}_x + {v_2}_x = 0\\
{v_{13}}_y - {v_{11}}_y + {v_2}_y = 0\\
{v_{12}}_x + {v_1}_x - {v_{13}}_x = 0\\
{v_{12}}_y + {v_1}_y - {v_{13}}_y = 0\\
{v_i}_x^2+{v_i}_y^2 - d^2 = 0~~~~~~~~~~~~~~~~1\leq i \leq 13
\end{array}
\right.
\end{equation}

It is a polynomial system of 27 equations in 27 variables. As it was once noticed above, there are known methods for isolating the roots of the polynomial systems. However, this system also appears to have an infinite number of complex solutions, so these methods fail, if applied directly. So we try to do some algebraic localization to eliminate the infinite family of solutions. Adding the condition $d \ne 0$, rewritten as $d\cdot s = 1$ by adding an independent variable seems to do the thing: now the {\ttfamily Maple}'s {\ttfamily Isolate} function from {\ttfamily RootFinding} isolates the $203$ roots of the modified system. Now, we check that there is a unique root of the system even on the set $0 \leq d \leq 0.5$, and thus it is the one we know, which coincides with the optimal. That finishes the proof of the Theorems \ref{thm7} and \ref{thm8}.

{\noindent \bf Remark}. One may think that adding such condition $d\cdot s = 1$ could do the thing for all the graphs; however, that is far from the general case.

\section{Conclusion}

For $N=9$, we present a result of numerical simulations.

The code used for solving the problem and its output for $N=6$ and $N=7$ is available at \url{https://dl.dropbox.com/u/124992374/index.xhtml}. Note that it uses some randomization and has some asserts that may fail sometimes. If the case, the program just needs to be restarted for the failed graph.

\medskip

\noindent{\bf Acknowledgment.} We wish to thank Alexey Tarasov, Vladislav Volkov and Brittany Fasy for some useful comments and remarks, and especially Thom Sulanke for modifying {\ttfamily surftri} to suit our purposes.

\newpage
\appendix
\section{Numerical bounds for the graphs}
\label{app_graphs6}
Here we present the numerical bounds for the edges of $G_1$ and $G_2$ produced by the code

~

\noindent\begin{minipage}[t]{0.48\textwidth}
$G_1$:
\begin{alltt}{\sffamily\small{ab: (0.346659 0.34685, -0.200409 -0.200038)
ac: (0.346668 0.34686, 0.200022 0.200375)
ad: (-0.265502 -0.265339, 0.299724 0.299871)
ae: (-0.265475 -0.265366, -0.29985 -0.299742)
bf: (-0.265466 -0.265357, -0.29988 -0.299747)
be: (0.387784 0.387866, -0.0997594 -0.0994001)
cd: (0.38772 0.387911, 0.099223 0.100013)
cf: (-0.265475 -0.265339, 0.299738 0.299875)
df: (0.346668 0.34686, 0.200036 0.200367)
de: (-0.000109204 0.000109204, 0.4004 0.400427)
ef: (0.346696 0.346832, -0.200325 -0.200083)}}
\end{alltt}
\end{minipage}
\begin{minipage}[t]{0.48\textwidth}
$G_2$:
\begin{alltt}{\sffamily\small{ab: (0.299465 0.30012, 0.265043 0.265794)
ac: (-0.300166 -0.299428, 0.265001 0.265842)
ad: (-0.200635 -0.199762, -0.347011 -0.346512)
be: (0.200062 0.200362, -0.346848 -0.346663)
bc: (0.400342 0.400452, -0.00759171 0.00810797)
bf: (-0.0997579 -0.0994303, 0.387787 0.387861)
cd: (0.0992846 0.0998033, 0.387762 0.387948)
ce: (-0.200335 -0.200062, -0.346848 -0.346687)
de: (-0.299865 -0.299592, 0.265332 0.265718)
df: (0.400397 0.400451, -0.00456848 0.00469225)
ef: (-0.299956 -0.299738, -0.265484 -0.265304)}}
\end{alltt}
\end{minipage}
 
~
 
For $G_1$ we have $\delta<0.002$ and $\eps<0.002$. After calculating the inverse of the differential we get $h\geq \frac{1}{16}$, and so the sufficient inequality $\eps<\frac h4$ holds.

Similarly, for $G_2$ we get $h\geq \frac{1}{35}$ and still $\eps < \frac {h}{4}$.

\newpage
\section{Coordinates of the graphs}
\label{coords}
\renewcommand{\frac}{\dfrac}

Here are some exact values for the coordinates of the vertices of the optimal graphs:

$N=2: \left(0, 0\right), \left(\frac 12, \frac 12\right).$

$N=3: \left(0, 0\right), \left(\frac 12, \frac{\sqrt{3}}{2}\right), \left(\frac{\sqrt{3}-1}{2}, \frac{\sqrt{3}-1}{2}\right).$

$N=4: \left(0, 0\right), \left(\frac 12, \frac{\sqrt{3}}{2}\right), \left(\frac{\sqrt{3}-1}{2}, \frac{\sqrt{3}-1}{2}\right), \left(\frac{\sqrt{3}}{2}, \frac 12\right).$

$N=5: \left(0, 0\right), \left(\frac 25, \frac 15\right), \left(\frac 45, \frac 25\right), \left(\frac 15, \frac 35\right), \left(\frac 35, \frac 45\right).$

The following was achieved by writing down the system of equations for the graph as it appeared to be (i.e., the edges that seem to be horizontal were assumed horizontal, etc) and solving it in {\ttfamily MATLAB}.
\setlength{\mathindent}{0pt}
\begin{equation*}
\begin{array}{l}
N=6: (0, 0),\\
\left(\frac 14\,\sqrt {3}-\frac {1}{12}\,\sqrt {2}\sqrt {3\,\sqrt {3}+2}+\frac {1}{12},
-\frac {1}{12}\, \left( -3\,\sqrt {3}+\sqrt {2}\sqrt {3\,\sqrt {3}+2}-1 \right) \sqrt {3}\right),\\
\left({\frac {11}{12}}-\frac 14\,\sqrt {3}+\frac {1}{12}\,\sqrt {2}\sqrt {3\,\sqrt {3}+2},
-\frac {1}{12}\, \left( -3\,\sqrt {3}+\sqrt {2}\sqrt {3\,\sqrt {3}+2}-1 \right) \sqrt {3}\right),\\
\left(\frac 12,
1-\frac {1}{12}\,\sqrt {3}\sqrt {2}\sqrt {3\,\sqrt {3}+2}+\frac 13\,\sqrt {3}-\frac 14\,\sqrt {2}\sqrt {3\,\sqrt {3}+2}\right),\\
\left({\frac {5}{12}}-\frac 14\,\sqrt {3}+\frac {1}{12}\,\sqrt {2}\sqrt {3\,\sqrt {3}+2},
\frac 54+\frac 14\,\sqrt {3}-\frac 14\,\sqrt {2}\sqrt {3\,\sqrt {3}+2}\right),\\
\left({\frac {7}{12}}+\frac 14\,\sqrt {3}-\frac {1}{12}\,\sqrt {2}\sqrt {3\,\sqrt {3}+2},
\frac 54+\frac 14\,\sqrt {3}-\frac 14\,\sqrt {2}\sqrt {3\,\sqrt {3}+2}\right).
\end{array}
\end{equation*}

\newcommand{\ddd}{\frac{1}{1+\sqrt{3}}}
For $N=7$ and $8$ for all the optimal graphs we just write down the edge coordinates, as there are only $6$ types of them and they look simpler than the point coordinates:
$\left(\frac 12 \ddd, \pm \frac {\sqrt{3}}{2}\ddd\right), \left(\frac {\sqrt{3}}{2}\ddd, \pm \frac 12 \ddd\right), \left(0, \ddd\right), \left(\ddd, 0\right).$
\begin{thebibliography}{99}
\bibitem{BMP}
Brass P., Moser W. O. J., Pach J.: Research problems in discrete geometry, Springer-Verlag, 2005.

\bibitem{plantri}
Brinkmann G., McKay B. D.: Fast generation of planar graphs, \url{http://cs.anu.edu.au/\textasciitilde bdm/plantri/}

\bibitem{Con1}
Connelly, R.: Juxtapositions rigides de cercles et de spheres. I. Juxtapositions finies. Struct. Topol. (14),
4360 (1988) (Dual French--English text)

\bibitem{Con2}
Connelly, R.: Juxtapositions rigides de cercles et de spheres. II. Juxtapositions infinies de mouvement fini.
Struct. Topol. (16), 5776 (1990) (Dual French--English text)

\bibitem{Con3}
Connelly, R.: Rigidity of packings. Eur. J. Combin. 29(8), 18621871 (2008)

\bibitem{Conw}
Conway J. H., Sloane N. J. A.: Sphere Packings, Lattices and Groups, Second edition, Springer-Verlag, New York, 1993

\bibitem{Dan}
Danzer L.: Finite point-sets on ${\bf S}^2$ with minimum distance
as large as possible, Discr. Math., 60 (1986), 3-66.

\bibitem{Dick}
Dickinson W., Guillot D., Keaton A., Xhumari S.: Optimal packings of up to five equal circles on a square flat torus, Beitr\"age zur Algebra und Geometrie, 52(2) (2011), 315-333.

\bibitem{Fejes}
Fejes T\'oth, L.: Regular figures. International Series of Monographs on Pure and Applied Mathematics,
vol. 48. Macmillan, New York (1964)

\bibitem{FeT}
Fejes T\'oth L.: Lagerungen in der Ebene, auf der Kugel und in
Raum, Springer-Verlag, 1953; Russian translation, Moscow, 1958

\bibitem{HabvdW}
W. Habicht und B.L. van der Waerden, Lagerungen von Punkten auf der Kugel, Math. Ann. {\bf 123} (1951),  223-234.

\bibitem{Mel}
Melissen J. B. M.: Densest packing of six equal circles in a square, Elemente Math. 49 (1994) 27–31.

\bibitem{Mus}
Musin O.~R. and Tarasov A.~S.: The Strong Thirteen Spheres Problem, Discrete \& Comput. Geom., 48 (2012), 128-141.

\bibitem{Sch}
Schaer J.: The densest packing of 9 circles in a square, Canad. Math. Bull. 8 (1965) 273–277.

\bibitem{SchM}
Schaer J., Meir A.: On a geometric extremum problem, Canad. Math. Bull. 8 (1965) 21–27.

\bibitem{SvdW1}
K. Sch\"utte and B.L. v. d. Waerden, Auf welcher Kugel haben
5,6,7,8 oder 9
Punkte mit Mindestabstand 1 Platz? Math. Ann. {\bf 123} (1951),
96-124.

\bibitem{SvdW2}
K. Sch\"utte and B.L. van der Waerden, Das Problem der dreizehn
Kugeln, Math. Ann. {\bf 125} (1953), 325-334.

\bibitem{surftri}
Sulanke T.: Generating irreducible triangulations of surfaces arXiv:math/0606687

\bibitem{Us}
Usikov D. A.: Optimal circle packings of a flat square torus. Private Сommunication, 2004. 
\end{thebibliography}
\end{document}